% This is file EJM2egui.tex
% release v1.02, 18th December 2000
%   (based on JFM2egui.tex v1.13 for LaTeX 2e
%         and EJMguide.tex v0.2 for LaTeX 2.09)
% Copyright (C) 1998,1999,2000 Cambridge University Press

\NeedsTeXFormat{LaTeX2e}

% The following saves the original definitions of \geq and \leq (guide only).

\documentclass{ejm}

\usepackage{amssymb}
\usepackage{amsfonts}
\usepackage{mathtools}
\DeclareMathAlphabet\mathbfcal{OMS}{cmsy}{b}{n}
\usepackage{amssymb}
\usepackage{tikz}
\usetikzlibrary{trees}
\usepackage{graphicx}
\usepackage{wrapfig}
\usepackage{booktabs,multirow}
\usepackage{hhline}
\usepackage[utf8]{inputenc}
\usepackage[english]{babel}

\usepackage[utf8]{inputenc}
\usepackage[T1]{fontenc}
\usepackage{textcomp}
\usepackage{siunitx}
\usepackage{adjustbox}

\usepackage{lipsum}% example text
\usepackage{color}
\usepackage{gensymb}
\usepackage{epstopdf}
\usepackage{bm}
\usepackage[english]{babel}
\usepackage{placeins}
\usepackage{makecell}
\usepackage{tabularx}
\usepackage{varioref}
\usepackage{rotating}
\usepackage{subcaption}
\usepackage{mathrsfs}
\usepackage{float}
\usepackage{setspace}
\usepackage[utf8]{inputenc}
\usepackage{fancyhdr}
\usepackage{graphicx}
\usepackage{adjustbox,lipsum}
\usepackage{hyperref}
\usepackage{multirow}
\usepackage{tabularx}
\usepackage{enumerate}
\usepackage{enumitem}
\usepackage[utf8]{inputenc}
\usepackage[T1]{fontenc}
\usepackage[utf8]{inputenc}
\usepackage[english]{babel}

%\oddsidemargin=0.2truecm  \evensidemargin=0.5truecm
%\textheight=23.5cm  \textwidth=16cm

%%% Macros for the guide only %%%

% The following adds 6pt of space around verbatim environments.
\let\realverbatim=\verbatim
\let\realendverbatim=\endverbatim
\renewcommand\verbatim{\par\addvspace{6pt plus 2pt minus 1pt}\realverbatim}
\renewcommand\endverbatim{\realendverbatim\addvspace{6pt plus 2pt minus 1pt}}
%%% End of macros for the guide %%%

% See if the author has AMS Euler fonts installed: If they have, attempt
% to use the 'upmath' package to provide upright math.

\ifprodtf \else
  \checkfont{eurm10}
  \iffontfound
    \IfFileExists{upmath.sty}
      {\typeout{^^JFound AMS Euler Roman fonts on the system,
                   using the 'upmath' package.^^J}%
       \usepackage{upmath}}
      {\typeout{^^JFound AMS Euler Roman fonts on the system, but you
                   don't seem to have the}%
       \typeout{'upmath' package installed. EJM.cls can take advantage
                 of these fonts,^^Jif you use 'upmath' package.^^J}%
      }
  \else
  \fi
\fi

% See if the author has AMS symbol fonts installed: If they have, attempt
% to use the 'amssymb' package to provide the AMS symbol characters.

\ifprodtf \else
  \checkfont{msam10}
  \iffontfound
    \IfFileExists{amssymb.sty}
      {\typeout{^^JFound AMS Symbol fonts on the system, using the
                'amssymb' package.^^J}%
       \usepackage{amssymb}%
         \let\leq=\leqslant
         \let\geq=\geqslant
      }{}
  \fi
\fi

% See if the author has the AMS 'amsbsy' package installed: If they have,
% use it to provide better bold math support (with \boldsymbol).

\ifprodtf \else
  \IfFileExists{amsbsy.sty}
    {\typeout{^^JFound the 'amsbsy' package on the system, using it.^^J}%
     \usepackage{amsbsy}}
    {}
\fi

%%% Example macros (some are not used in this sample file) %%%

% For units of measure

% Various bold symbols

% For multiletter symbols
 % cf plain TeX's \Re and Reynolds number
 % cf plain TeX's \Im
  % Reynolds number
 % Prandtl number, cf TeX's \Pr product
  % Peclet number
            % Airy function
            % Airy function

% For sans serif characters:
% The following macros are setup in EJM.cls for sans-serif fonts in text
% and math.  If you use these macros in your article, the required fonts
% will be substitued when you article is typeset by the typesetter.
%
% \textsfi, \mathsfi   : sans-serif slanted
% \textsfb, \mathsfb   : sans-serif bold
% \textsfbi, \mathsfbi : sans-serif bold slanted (doesnt exist in CM fonts)
%
% For san-serif roman use \textsf and \mathsf as normal.
%
    % for sans serif C
  % for sans serif sloping P
 % for sans serif bold-sloping Q

% Hat position
      % p with hat
      % R with hat
 % R with 2 hats

%       italic Sigma with double tilde

% array strut to make delimiters come out right size both ends
\newsavebox{\astrutbox}
\sbox{\astrutbox}{\rule[-5pt]{0pt}{20pt}}

\usepackage{setspace}
\usepackage{amsmath}
\usepackage{bm}

\newtheorem{theorem}{Theorem}[section]
\newdefinition{definition}[theorem]{Definition}

\title[European Journal of Applied Mathematics]{Domain-dependent stability analysis and parameter classification of a reaction-diffusion model on spherical geometries}

\author[W. Sarfaraz et al.]{%
  W.\ns  S\ls A\ls R\ls F\ls A\ls R\ls A\ls Z$\,^1$,\ns
  \and
  A.\ns M\ls A\ls D\ls Z\ls V\ls A\ls M\ls U\ls S\ls E$\,^2$
}

\affiliation{%
  $^1 \,$University of Sussex, School of Mathematical and Physical Sciences, Department of Mathematics, Pevensey 3, Brighton, BN1 9QH, UK\\
  $^1 \,$email\textup{\nocorr: \texttt{wakilsarfaraz@gmail.com}}\\
  $^2\,$University of Sussex, School of Mathematical and Physical Sciences, Department of Mathematics, Pevensey 3, Brighton, BN1 9QH, UK\\
$^2 \,$email\textup{\nocorr: \texttt{a.madzvamuse@sussex.ac.uk}}\\}

\date{09 January 2018}
\pubyear{2000}
\volume{000}
\pagerange{\pageref{firstpage}--\pageref{lastpage}}

\begin{document}

\label{firstpage}
\maketitle

\begin{abstract}
In this work an \textit{activator-depleted} reaction-diffusion system is investigated on polar coordinates with the aim of exploring the relationship and the corresponding influence of domain size on the types of possible diffusion-driven instabilities. Quantitative relationships are found in the form of necessary conditions on the area of a disk-shape domain with respect to the diffusion and reaction rates for certain types of diffusion-driven instabilities to occur. Robust analytical methods are applied to find explicit expressions for the eigenvalues and eigenfunctions of the diffusion operator on a disk-shape domain with homogenous Neumann boundary conditions in polar coordinates. Spectral methods are applied using chebyshev non-periodic grid for the radial variable and Fourier periodic grid on the angular variable to verify the nodal lines and eigen-surfaces subject to the proposed analytical findings. 
The full classification of the parameter space in light of the bifurcation analysis is obtained and numerically verified by finding the solutions of the partitioning curves inducing such a classification. Furthermore, analytical results are found relating the area of (disk-shape) domain with reaction-diffusion rates in the form of necessary conditions for the different types of bifurcations. These results are on one hand presented in the form of mathematical theorems with rigorous proofs, and, on the other hand using finite element method, each claim of the corresponding theorems are verified by obtaining the theoretically predicted behaviour of the dynamics in the numerical simulations. Spatio-temporal periodic behaviour is demonstrated in the numerical solutions of the system for a proposed choice of parameters and a rigorous proof of the existence of infinitely many such points in the parameter plane is presented under a restriction on the area of the domain, with a lower bound in terms of reaction-diffusion rates.   
\end{abstract}

\begin{keywords}
%Authors should include up to five `key subject categories', listed in order of importance, taken from the Mathematics Subject Classification \\ (see http://www.ams.org/mathscinet/msc/pdfs/classification2010.pdf ).
Reaction-diffusion systems, Dynamical systems, Bifurcation analysis, Stability analysis, Turing diffusion-driven instability , Hopf Bifurcation, Transcritical bifurcation, Parameter spaces, Polar coordinates, Curved boundary
\end{keywords}

\section{Introduction}
\label{sec1}
Analysis of reaction-diffusion systems (RDSs) in the context of pattern formation is a widely studied topic \cite{book1, book7, paper1, paper2, paper23, paper30, paper31, paper32} in many branches of scientific research. Scholars of mathematical and computational biology \cite{paper3,paper4,paper5, paper6, paper7, paper11, paper13, paper17} devoted a great deal of attention to fully explore the theory of the dynamics governed by reaction-diffusion systems for a variety of reaction kinetics. Few of the routinely used and popular reaction kinetics in the theory of reaction-diffusion systems are \textit{activator-depleted} \cite{paper29, paper28, paper26, paper27, paper22, paper20, thesis2}, Gierer-Meinhardt \cite{paper24, paper23} and Thomas reaction kinetics \cite{paper25, paper37}. It is crucial to realise that the ultimate and complete knowledge encapsulating all aspects of reaction-diffusion systems for all types of possible reaction-kinetics is beyond the scope and feasibility of a single research paper or even a single book. Therefore, literature to date \cite{paper37}-\cite{book1} on the analysis of reaction-diffusion systems falls under a natural classification of approaches based on the application of methods corresponding to the speciality of a particular scholar. It is in this spirit that the current work is a natural extension of \cite{paper38}, in which a full classification of the admissible parameter space associated to the dynamical behavior of \textit{activator-depleted} reaction-diffusion system was explored on stationary rectangular domains. It is a common hypothesis in the theory of biological pattern formation \cite{paper39, paper13}, that the emergence of spatially periodic pattern in the computational simulations of reaction-diffusion systems is connected to the properties of the associated eigenfunctions of the diffusion operator in the corresponding domain. The associated drawback with computational approaches \cite{paper10, paper12, paper15, paper16, paper19, paper21, paper37, book4, book5, book6} for reaction-diffusion systems is that it lacks to provide rigorous insight of the role of the eigenfunctions in the emergence of spatial pattern, thus forming a natural platform to explore the evolution of spatial patterns from a perspective of dynamical systems. Numerous researchers have contributed to exploring the computational aspect of reaction-diffusion systems \cite{paper4, paper12, paper14, paper15, paper26, paper27}, in which a restricted choice of parameter values are used with a partial reliance on trial and error to obtain the emergence of an evolving pattern either in space or in time. A large variety of scholars have also employed the analytical approach \cite{paper1, paper9, paper13, paper17, paper18, paper20, paper28}, to explore reaction-diffusion systems from a perspective of dynamical systems. Majority of cases with analytical approaches concluded with certain conditions \cite{paper9, paper17, paper18, paper20, paper28, paper31} on the characteristics of the stability matrix that theoretically predicts reaction-diffusion systems to exhibit spatial patterns, lacking to extend the analysis to computational consequences of these conditions on the admissible parameter spaces in terms of diffusion-driven instability. Furthermore, from the literature to date \cite{paper28, paper30, paper39, paper45, paper46, paper47, paper48}, it is evident that insufficient attention is given to rigorously explore the influence of reaction-diffusion rates on the dynamical behaviour of such systems in the context of domain size. The motivation of the current work originates from the hypothesis of the spatial dependence of the eigenfunctions of the diffusion operator with the domain size itself and extending this idea to further investigate how this spatial dependence induces an influence on the linearised stability-matrix of a reaction-diffusion system through the properties of the eigenfunctions of the diffusion operator.  With an attempt to further contribute to the current knowledge of reaction-diffusion systems, the present paper employs a set of rigorous findings obtained from bifurcation analysis of the \textit{activator-depleted} reaction-diffusion model to explore the corresponding influence induced on the admissible parameter spaces and diffusion-driven instability. Despite the restriction of the current work to a particular type of reaction kinetics namely \textit{activator-depleted}, the methodology can however serve as a framework for developing a standard independent method by combining the known aspects of exploring the topic namely bifurcation analysis, parameter spaces and computational methods, that could be utilised for general reaction-diffusion systems. The lack of a self-contained complete methodology that combines all the known distinct aspects of exploring this topic creates a credible argument for the importance of this work. The contents of the current paper offer a natural extension of the work presented in \cite{paper38}, where the domain of solution was restricted to rectangular geometries. Equivalent results to those presented in \cite{paper38} are found in the present work, except that the domain of solution in this work is a two dimensional disk-shape geometry bounded by a circle. The bulk of the current work consists of rigorously proven statements that quantitatively relate the radius of a disk-shape solution domain to reaction-diffusion rates in light of bifurcation analysis, which are computationally verified using the finite element method.

The contents of this work are structured such that in Section \ref{absenceone} the model equations for \textit{activator-depleted} reaction-diffusion system are stated in cartesian coordinates and transformed to polar coordinates in its non-dimensional form with the corresponding initial and boundary conditions.  Section \ref{linearisation} consists of a detailed analytical method to explicitly derive eigenfunctions and the corresponding eigenvalues satisfying the boundary conditions prescribed for \textit{activator-depleted} reaction-diffusion system in Section \ref{absenceone}. Furthermore, in Section \ref{linearisation}, an application of spectral approach on polar coordinates using chebyshev and Fourier grid method is presented \cite{book8} to demonstrate and verify the analytical derivation of the eigenfunctions as well as the stability matrix and the corresponding characteristic polynomial for a linearised approximation of the original system. Section \ref{bifur} presents analytical results on the relationship of domain-size (radius of disk) with different types of bifurcations in the dynamics. Section \ref{main} is devoted to the parameter space classification in light of bifurcation analysis of the uniform steady-state of the system. A numerical method for computing the partitioning curves for such classification is presented in detail. The shift of parameter spaces as a consequence of changing reaction-diffusion rates is investigated and theorems are proven that rigorously establish the relation of domain size (radius of disk) with the corresponding types of diffusion-driven instabilities. Furthermore, a full classification of the admissible parameter space in terms of stability and types of the uniform steady state of the system is also presented in Section \ref{bifur}, where it is shown that if the radius of a disk-shape domain satisfies certain inequalities in terms of reaction-diffusion rates, then the admissible parameter space allows or forbids certain types of bifurcations in the dynamics of the reaction-diffusion system. Section \ref{fem} presents numerical simulations of an \textit{activator-depleted} reaction-diffusion system using finite element method to verify the proposed classification of parameter spaces and the theoretically predicted behaviour in the dynamics. Due to the curved boundary of the domain a non-standard technique called \textit{distmesh} \cite{paper40, paper41, paper44} is used to obtain discretisation for simulating the finite element method. The technicality of the algorithm for \textit{distmesh} is briefly explained. Section \ref{conclusion} presents the conclusion and possible extensions of the current work.

\section{Model equations}\label{absenceone}
Two chemical species $u$ and $v$ are modelled by the well-known \textit{activator-depleted} reaction-diffusion system, where both species are coupled through nonlinear reaction terms. The system assumes independent diffusion rates for both of the species. The RDS is considered on a two dimensional circular domain denoted by $\Omega \in \mathbb{R}^2$, where $\Omega$ is defined by 
\[
\Omega = \{(x,y)\in \mathbb{R}^2: x^2+y^2<\rho^2\}.
\]
This forms a two dimensional disk-shape domain with a circle forming its boundary denoted by $\partial \Omega$, which contains the points given by 
\[
\partial\Omega = \{(x,y)\in \mathbb{R}^2: x^2+y^2 = \rho^2\}.
\]
The RDS with \textit{activator-depleted} reaction kinetics in its non-dimensional form on cartesian coordinates has the form
\begin{equation}
\begin{cases}
\begin{cases}
\frac{\partial u}{\partial t} =& \triangle u + \gamma (\alpha -u+u^2v),  \qquad (x,y)\in\Omega,\quad t>0,\\
\frac{\partial v}{\partial t} =& d \triangle v + \gamma( \beta - u^2v),
\end{cases} \\
\frac{\partial u}{\partial \bm{n}} = \frac{\partial v}{\partial \bm{n}}=0, \qquad \qquad \qquad \text{on} \quad (x,y) \in \partial \Omega, \quad t\geq 0,\\
u(x,y,0) = u_0(x,y), \qquad v(x,y,0) = v_0(x,y),\qquad(x,y) \in \Omega, \quad t=0,     
\end{cases}
\label{A}
\end{equation}
where $\alpha$, $\beta$, $\gamma$ and $d$ are strictly positive real constants. In system (\ref{A}) the positive parameter $d$ denotes the non-dimensional ratio of diffusion rates given by $d=\frac{D_v}{D_u}$, where $D_v$ and $D_u$ are the independent diffusion rates of $v$ and $u$ respectively. The non-dimensional parameter $\gamma$ is known as the scaling parameter of (\ref{A}), which quantifies the reaction rate. The boundary of $\Omega$ under the current study assumes homogeneous Neumann boundary conditions (also known as zero flux boundary conditions), which means that the chemical species $u$ and $v$ can neither escape nor enter through $\partial \Omega$. 
%This type of boundary conditions are referred to as homogeneous Neumann boundary conditions, which are written as
%\begin{equation}\label{zerofluxBCs}
%\begin{split}
%     \frac{\partial u}{\partial \bm{n}} = \frac{\partial v}{\partial \bm{n}}=0, \qquad \text{on} \quad (x,y) \in \partial \Omega, \quad t\geq 0,
%     \end{split}
% \end{equation}
$\bm{n}$ denotes the normal to $\partial \Omega$ in the outward direction. Initial conditions for (\ref{A}) are prescribed 
%such that initially some strictly positive quantity is present from each concentration at the starting instant of the dynamics. 
%which is written as 
%\begin{equation}\label{ICs}
%\begin{split}
%    u(x,y,0) = u_0(x,y), \qquad v(x,y,0) = v_0(x,y),\qquad(x,y) \in \Omega, \quad t=0,
%     \end{split}
% \end{equation} 
as positive bounded continuous functions $u_0(x,y)$ and $v_0(x,y)$.
System (\ref{A}) can be spatially transformed to polar coordinates using $x = r \cos \theta$ and $y = r \sin \theta$ to obtain
\begin{equation}
\begin{cases}
\begin{cases}
   \frac{\partial u}{\partial t} =& \triangle_p u + \gamma f(u,v), \\
   \frac{\partial v}{\partial t} =& d \triangle_p v + \gamma  g(u,v),
\end{cases} \\
\frac{\partial u}{\partial r}\big |_{r=\rho} = \frac{\partial v}{\partial r}\big |_{r=\rho}=0, \qquad (r,\theta)\in\partial\Omega, \quad t\geq 0,\\
u(r,\theta,0)=u_0(r,\theta), \qquad v(r,\theta,0)=v_0(r,\theta), \qquad (r,\theta) \in \Omega, \quad t=0, 
      \end{cases}
   \label{polarsystem}
\end{equation}
where $\triangle_p$ denotes the Laplace operator in polar coordinates written as 
\begin{equation}
\triangle_p u(r,\theta)=\frac{1}{r}\frac{\partial}{\partial r}\Big(r\frac{\partial u}{\partial r}\Big)+\frac{1}{r^2}\frac{\partial^2u}{\partial \theta^2}.
\label{polaplace}
\end{equation} 
In (\ref{polarsystem}),  $u$ and $v$ depend on coordinates $(r,\theta)$ and the functions $f$ and $g$ are defined by $f(u,v)=\alpha-u+u^2v$ and $g(u,v)=\beta-u^2v$. Initial and boundary conditions are transformed in a similar fashion. In (\ref{polarsystem}) the strictly positive constants namely $\alpha$, $\beta$, $d$ and $\gamma$ remain to satisfy exactly the same definitions as in (\ref{A}). 

%The boundary conditions may also be transformed to polar coordinates by noting that due to the circular geometry of the boundary $\partial \Omega$, the outward normal $\bm{n}$ at any point $(\rho,\theta)$ will always be found perpendicular to the tangent vector at that point. It means that the equivalent to zero flux boundary conditions in polar coordinates will take the form
%\begin{equation}
%\frac{\partial u}{\partial r}\Big |_{r=\rho} = \frac{\partial v}{\partial r}\Big |_{r=\rho}=0, \qquad (r,\theta)\in\partial\Omega, \quad t\geq 0.
%\label{BoundaryCs}
%\end{equation}
%Finally the initial conditions can also be written in polar coordinates, which are by definition spatial strictly positive bounded functions of the form
%\begin{equation}
%u(r,\theta,0)=u_0(r,\theta), \qquad v(r,\theta,0)=v_0(r,\theta), \qquad (r,\theta) \in \Omega, \quad t=0.
%\label{InitialCs}
%\end{equation}
\section{Stability analysis of the reaction-diffusion system}\label{linearisation}
%\subsection{A note on the analysis of the system in the absence of diffusion} 
%Note that system (\ref{A}) in cartesian coordinates to system (\ref{polarsystem}) in polar coordinates, are exactly the same in formulation, the only difference lies in the form of the Laplace operator.  This means that stability analysis of system (\ref{A}) in the absence of diffusion suffices to provide every insight about the stability analysis in the absence of diffusion for system (\ref{polarsystem}). In addition to the similarity of these two systems, it is well-known from the literature \cite{book1, book7, paper17, paper38} that for the stability analysis in the absence of diffusion, system (\ref{polarsystem}) becomes a system of ordinary differential equations, therefore the analysis becomes independent of spatial geometry. Hence, all the existing results on stability analysis and parameter classification of the \textit{activator-depleted} reaction kinetics in the absence of diffusion hold true independent of the choice of geometry for the domain. A detailed study in \cite{paper38} on the classification of parameter spaces in light of the stability analysis for the \textit{activator-depleted} reaction kinetics in the absence of diffusion, holds true if the diffusion terms are excluded from system (\ref{polarsystem}). Therefore, to avoid repetition for this current study, we only focus on the case when diffusion is present. \\
\subsection{Stability analysis of the reaction-diffusion system in the presence of diffusion}
Let $u_s$ and $v_s$ denote the uniform steady state solution satisfying system  (\ref{A}) (and equivalently system (\ref{polarsystem})) with  \textit{activator-depleted} reaction kinetics in the absence of diffusion and these are given by  $(u_s,v_s)=(\alpha+\beta, \frac{\beta}{(\alpha+\beta)^2})$ \cite{book1, book7, paper38, paper17}. %This is because the uniform steady state is a constant solution of the system, which vanishes on the application of the diffusion and the prescribed boundary conditions given in (\ref{polarsystem}), therefore it is reasonable to consider this as the uniform steady state solution of the system (\ref{polarsystem}) also in the presence of diffusion.   
For linear stability analysis, system (\ref{polarsystem}) is perturbed in the neighbourhood of the uniform steady state $(u_s,v_s)$ and the dynamics of the perturbed system are explored i.e. $(u,v)=(\bar{u}+u_s, \bar{v}+v_s)$, where $\bar{u}$ and $\bar{v}$ are assumed small. In system (\ref{polarsystem}) the variables $u$ and $v$ are substituted by the expression in terms of $(\bar{u},\bar{v})$ and $(u_s,v_s)$ and expanded using Taylor expansion for functions of two variables up to and including the linear terms with the higher order terms discarded. This leads to the linearised version of system (\ref{polarsystem}) written in matrix form as
\begin{equation}
\frac{\partial}{\partial t}\left[\begin{array}{c}
     \bar{u}  \\
     \bar{v} 
\end{array}\right]=\left[\begin{array}{cc}
     1&0  \\
     0&d 
\end{array}\right]\left[\begin{array}{c}
     \triangle_p \bar{u} \\
     \triangle_p \bar{v} 
\end{array}\right]+\left[\begin{array}{cc}
     \frac{\partial f}{\partial u}(u_s,v_s)& \frac{\partial f}{\partial v}(u_s,v_s)  \\
     \frac{\partial g}{\partial u}(u_s,v_s)&\frac{\partial g}{\partial u}(u_s,v_s)
\end{array}\right]\left[\begin{array}{c}
     \bar{u} \\
     \bar{v} 
\end{array}\right].
\label{vect}
\end{equation}
The next step to complete the linearisation of system (\ref{polarsystem}) is to compute the eigenfunctions of the diffusion operator namely $\triangle_p$, which will require to find the solution to an elliptic 2 dimensional eigenvalue problem on a disk that satisfies homogeneous Neumann boundary conditions prescribed for system (\ref{polarsystem}).
%\subsection{Derivation of the eigenfunctions}\label{partitionone}

The solution of eigenvalue problems on spherical domains is a well studied area \cite{book9, paper33, book10}, with the majority of research focused on problems with boundary-free manifolds such as circle, torus and/or sphere. Considering the restriction imposed from the boundary conditions prescribed for the current problem, entails that the case requires explicit detailed treatment to rigorously find the eigenfunctions satisfying these boundary conditions. Therefore, it is important to present a step-by-step demonstration of the process, starting with writing out the relevant eigenvalue problem all the way through to finding the closed form solution in the form of an infinite set of eigenfunctions satisfying such an eigenvalue problem. The eigenvalue problem we want to solve is of the form 
\begin{equation}    \label{eigen}
\begin{cases}
%\begin{equation}
    \triangle_p w = -\eta^2 w, \qquad \eta \in \mathbb{R},
\\
\frac{\partial w}{\partial r}\big |_{r=\rho}=0, \qquad \rho\in\mathbb{R}_+\backslash\{0\},
\end{cases}
\end{equation}
where $\triangle_p$ is the diffusion operator in polar coordinates defined by (\ref{polaplace}), on a disk with radius $\rho$. We use separation of variables to obtain the solutions of (\ref{eigen}). Application of separation of variables to problem (\ref{eigen}) requires a solution of the form $w(r,\theta)=R(r)\Theta(\theta)$, which is substituted into problem (\ref{eigen}) to obtain 
\begin{equation}
\frac{d^2R}{dr^2}\Theta+\frac{1}{r}\frac{dR}{dr}\Theta+\frac{1}{r^2}R\frac{d^2\Theta}{d\theta}=-\eta^2R\Theta.
\label{sep1}
\end{equation}
Dividing both sides of (\ref{sep1}) by $R\Theta$, multiplying throughout by $r^2$ and rearranging, yields  
\begin{equation}
r^2\frac{1}{R}\frac{d^2R}{dr^2}+r\frac{1}{R}\frac{dR}{dr}+\eta^2r^2=-\frac{1}{\Theta}\frac{d^2\Theta}{d\theta^2}.
\label{sep2}
\end{equation} 
Using anzats of the form $\Theta(\theta)=\exp(in\theta)$ and a change of variable $x = \eta r$ is applied to (\ref{sep2}). Taking the usual steps of Frobenius method it can be shown that the general solution $R(x)$ satisfying (\ref{sep2}) takes the form
$R(x)=R^1(x)+R^2(x)$, where $R^1(x)$ and $R^2(x)$ are respectively given by 
\begin{equation}
R^1(x)=x^n\sum_{j=0}^{\infty}\frac{(-1)^jC_0x^{2j}}{4^j\times j!\times(n+j)\times(n+j-1)\times \cdot\cdot\cdot\times(n+1)}
\label{sol1}
\end{equation}
and 
\begin{equation}
R^2(x)=x^{-n}\sum_{j=0}^{\infty}\frac{(-1)^jC_0x^{2j}}{4^j\times j!\times(-n+j)\times(-n+j-1)\times \cdot\cdot\cdot\times(-n+1)}.
\label{sol2}
\end{equation}
Series solutions (\ref{sol1}) and (\ref{sol2}) are referred to as the Bessel functions of the first kind \cite{book11, book10}. Before writing the general solution to problem (\ref{eigen}), it must be noted that we require $R$ to be a function of $r$ and not of $x$, bearing in mind that $r$ is related to $x$ under the linear transformation given by $x=\eta r$. Therefore, the general solution to problem (\ref{eigen}) can be written in the form 
\begin{equation}
w(r,\theta)=R(x(r))\Theta(\theta),
\label{eigenf}
\end{equation}
where $R(x(r))=R_1(x(r))+R_2(x(r))$ and $\Theta(\theta)=\exp(in\theta)$, with $R_1$ and $R_2$ defined by (\ref{sol1}) and (\ref{sol2}) respectively.
The homogeneous Neumann boundary conditions are imposed on the set of eigenfunctions (\ref{eigenf}), and noting that the flux is independent of the variable $\theta$, we have 
%\begin{equation}
$\frac{\partial w}{\partial r}\big|_{r=\rho}=0$ which implies that   $\frac{dR}{dr}\big|_{r=\rho} = 0$.
%\end{equation}
A straightforward application of chain rule yields  
%$\frac{dR}{dr}=\eta \frac{dR}{dx}$, which may be written as 
%\begin{equation}
$\frac{dR}{dr}\big|_{r=\rho}=\eta\frac{dR}{dx}\big|_{x=\eta \rho}$.
%\label{bound1}
%\end{equation}

Let $a_j$ and $b_j$ denote the coefficients corresponding the $jth$ term in the infinite series for $R_1(x)$ and $R_2(x)$ respectively, then for every $j \in \mathbb{N}$ the expressions for $a_j$ and $b_j$ take the forms
\begin{equation}
a_j = \frac{(-1)^jC_0}{4^j\times j!\times(n+j)\times(n+j-1)\times \cdot\cdot\cdot\times(n+1)},
\label{coeff1}
\end{equation}
\begin{equation}
b_j = \frac{(-1)^jC_0}{4^j\times j!\times(-n+j)\times(-n+j-1)\times \cdot\cdot\cdot\times(-n+1)}.
\label{coeff2}
\end{equation}
This entails that $R(x)$ can be written in the form 
%\begin{equation}
$R(x)=\sum_{j=0}^{\infty}\big(a_jx^{n+2j}+b_jx^{-n+2j}\big)$.
%\label{coeff3}
%\end{equation}
Differentiating $R(x)$ with respect to $x$ and equating it to zero, on substituting $x=\eta \rho$ and using the chain rule we obtain the equation
\begin{equation}\begin{split}
0 = & \frac{dR}{dr}\Big|_{r=\rho}=\eta\sum_{j=0}^{\infty}\big[a_j(n+2j)x^{n+2j-1}+b_j(-n+2j)x^{-n+2j-1}\big]\Big|_{x=\eta\rho}\\ 
=&\eta\sum_{j=0}^{\infty}\big[a_j(n+2j)(\eta\rho)^{n+2j-1}+b_j(-n+2j)(\eta \rho)^{-n+2j-1}\big]\\
=&\eta\Big[(\eta\rho)^n\sum_{j=0}^{\infty}a_j(n+2j)(\eta\rho)^{2j-1}+(\eta\rho)^{-n}\sum_{j=0}^{\infty}b_j(-n+2j)(\eta \rho)^{2j-1}\Big],
\label{coeff4}\end{split}\end{equation}
which holds true if and only if both of the summations in (\ref{coeff4}) are independently zero. Investigating the first summation in (\ref{coeff4}) and expanding it for a few successive terms, it can be shown that the successive terms carry alternating signs (due to the expression for $a_j$), therefore, the only way the infinite series can become zero is if each of the successive terms cancel one another. Let $F_j$ and $S_j$ denote the $jth$ terms of the first and second summations in (\ref{coeff4}) respectively, then for (\ref{coeff4}) to hold true, $F_j+F_{j+1}=0$ and $S_j+S_{j+1}=0$ must be true for all $j\in\mathbb{N}$. The full expressions for $F_j$ and $F_{j+1}$ can be written as
\begin{equation}
F_j=\frac{(\eta\rho)^n(-1)^jC_0(n+2j)(\eta\rho)^{2j-1}}{4^j\times j!\times(n+j)\times(n+j-1)\times \cdot\cdot\cdot\times(n+1)}
\label{fj}
\end{equation}
and 
\begin{equation}
F_{j+1}=\frac{(\eta\rho)^n(-1)^{j+1}C_0(n+2j+2)(\eta\rho)^{2j+1}}{4^{j+1}\times (j+1)!\times(n+j+1)\times(n+j)\times \cdot\cdot\cdot\times(n+1)}.
\label{fj+1}
\end{equation}
For the first $(j=0,j=1)$, second $(j=2,j=3)$ and third $(j=4,j=5)$ successive pairs, independently equating the sum of the expressions (\ref{fj}) and (\ref{fj+1}) to zero yields that $\eta^2_{n,k}$ can be written as 
%relation between $\eta$ and $\rho$ in the form
%\begin{equation}\begin{split}
%\eta_0^2 = \frac{4\times1\times(n+1)}{\rho^2(n+2)},\quad \eta_1^2=\frac{4\times3\times(n+3)\times(n+4)}{\rho^2\times(n+6)},\quad \eta_2^2 = \frac{4\times5\times(n+5)\times(n+8)}{\rho^2\times(n+10)},
%%\label{coeff5}
%\end{split} \notag 
%\end{equation}
%respectively. We can spot a pattern that for a pair corresponding to an even integer $j$, such that $j=2k$, then the $kth$ eigenvalue must be related to the radius of $\Omega$ through the following equation
\begin{equation}
\eta_{n,k}^2=\frac{4(2k+1)(n+2k+1)(n+4k)}{\rho^2(n+4k+2)}.
\label{eigenvalue}
\end{equation}
%Similarly, it can be shown that for the three successive pairs namely $(j=0,j=1)$, $(j=2,j=3)$ and $(j=4,j=5)$ upon imposing the requirement $S_j+S_{j+1}=0$, one can find respective relations of $\eta_0$, $\eta_1$ and $\eta_2$ with $\rho$ in a form that is exactly similar to that given by $\eta_i^2$, $i=0,1,2$. This completes the solution to problem (\ref{eigen}), which is summarised in the following theorem.
\subsubsection{\bf Remark}\label{remark3}
{\it The restriction on the order of the corresponding Bessel's equation namely $n\in\mathbb{R}\backslash\frac{1}{2}\mathbb{Z}$ in Theorem \ref{theorem1} can be relaxed by employing Bessel's function of the second kind \cite{book9, book10, book11} and imposing on it the homogeneous Neumann boundary conditions. Therefore, for the purpose of the current study the order of the corresponding Bessel's equation is precluded from becoming a full or half integer. Bear in mind that the proposed choice of $n\in\mathbb{R}\backslash\frac{1}{2}\mathbb{Z}$ makes the set of eigenvalues $\eta_{n,k}^2$ a semi discrete infinite set. The spectrum is discrete with respect to positive integers $k$ and continuous (uncountable) with respect to $n$. }

\begin{theorem}\label{theorem1}
 Let $w(r,\theta)$ satisfy problem (\ref{eigen}) with homogeneous Neumann boundary conditions. Given that the order $n$ of the associated Bessel's function belongs to the set $\mathbb{R}\backslash\frac{1}{2}\mathbb{Z}$, [see Remark \ref{remark3}] then for a fixed $n$ there exists an infinite set of eigenfunctions of the diffusion operator $\triangle_p$ as defined in (\ref{polaplace}), which is given by 
 \begin{equation}
  w_{n,k}=\big[R^1_{n,k}(r)+R^2_{n,k}(r)\big]\Theta_n(\theta)
  \label{eigenfunc}
 \end{equation}
with the explicit expressions for $R_{n,k}^1(r)$, $R_{n,k}^2(r)$ and $\Theta_n(\theta)$ as 
\begin{equation}
R^1_{n,k}(r)=\sum_{j=0}^{\infty}\frac{(-1)^jC_0(\eta_k r)^{2j+n}}{4^j j!(n+j)(n+j-1) \cdot\cdot\cdot(n+1)},
\end{equation}
\begin{equation}
R^2_{n,k}(r)=\sum_{j=0}^{\infty}\frac{(-1)^jC_0(\eta_k r)^{2j-n}}{4^j j!(-n+j)(-n+j-1) \cdot\cdot\cdot(-n+1)} 
\end{equation}
and
\begin{equation}
\Theta_n(\theta)= \exp(in\theta),
\end{equation}
where for the $kth$ successive pair $\eta_{n,k}$ satisfies (\ref{eigenvalue}), when ever $j=2k$. 
\begin{proof}
The proof consists of all the steps from (\ref{sep1}) to (\ref{eigenvalue}).
\end{proof}
\end{theorem}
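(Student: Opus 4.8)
The plan is to produce the eigenpairs constructively by separation of variables, reducing problem (\ref{eigen}) to a pair of decoupled ordinary differential equations and then solving the radial one as a Bessel equation. First I would substitute the product ansatz $w(r,\theta)=R(r)\Theta(\theta)$ into $\triangle_p w=-\eta^2 w$, obtaining (\ref{sep1}); dividing by $R\Theta$ and multiplying through by $r^2$ separates the variables into the form (\ref{sep2}), whose left-hand side depends only on $r$ and right-hand side only on $\theta$. Both must therefore equal a common separation constant. Taking $\Theta(\theta)=\exp(in\theta)$ makes the right-hand side equal to $n^2$ and fixes the angular factor $\Theta_n$. What remains is a radial equation which, under the linear change of variable $x=\eta r$, becomes Bessel's equation of order $n$, namely $x^2R''+xR'+(x^2-n^2)R=0$.

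Next I would solve this radial equation by the Frobenius method: seeking $R=\sum_{j\ge 0}a_j x^{s+j}$ gives the indicial roots $s=\pm n$, and hence two series solutions, one behaving like $x^{n}$ and one like $x^{-n}$ near the origin, with coefficients exactly as in (\ref{coeff1}) and (\ref{coeff2}). These are the Bessel functions of the first kind of order $\pm n$, giving $R^1$ and $R^2$ of (\ref{sol1})--(\ref{sol2}). Their linear independence, and the absence of any logarithmic second solution, is precisely what the hypothesis $n\in\mathbb{R}\setminus\tfrac12\mathbb{Z}$ guarantees, which is why the order is excluded from the half-integers (cf. Remark \ref{remark3}). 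Reverting $x\to\eta r$ then yields the general solution (\ref{eigenf}). Imposing the Neumann condition $\partial w/\partial r|_{r=\rho}=0$, and using that the flux is independent of $\theta$, reduces the boundary requirement to $R'(\rho)=0$; the chain rule and termwise differentiation produce equation (\ref{coeff4}), in which the $x^{n}$-block and the $x^{-n}$-block must vanish independently.

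The decisive and most delicate step is extracting the eigenvalue from this boundary condition, and I expect the main obstacle to lie exactly here. In the classical Sturm--Liouville treatment, $R'(\rho)=0$ is the transcendental condition that $\eta\rho$ be a zero of the derivative of the Bessel function, which determines a discrete increasing sequence $\{\eta_{n,k}\}_k$ but admits no elementary closed form. The route taken here instead exploits the alternating signs of the coefficients (inherited from the $(-1)^j$ factor) and imposes that successive terms cancel in pairs, i.e. $F_j+F_{j+1}=0$ and $S_j+S_{j+1}=0$ for the terms of (\ref{fj})--(\ref{fj+1}); solving this relation for the $k$th pair (taking $j=2k$) yields the explicit quantisation (\ref{eigenvalue}). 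The crux is therefore to justify that pairwise cancellation is both sufficient for the series to vanish and internally consistent, in the sense that the value of $\eta^2_{n,k}$ so obtained does not depend on which successive pair is used, so that a single well-defined eigenvalue emerges for each index $k$. A secondary point needing attention is regularity at $r=0$, since the $R^2\sim x^{-n}$ branch is singular at the centre of the disk for $n>0$; one must either argue it is admissible in the present setting or discard it in favour of the $R^1$ branch. Once the quantisation is secured, assembling $w_{n,k}=\big[R^1_{n,k}+R^2_{n,k}\big]\Theta_n$ and letting $k$ range over $\mathbb{N}$ delivers the claimed infinite family of eigenfunctions for each fixed $n$.
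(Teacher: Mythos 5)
Your proposal follows essentially the same route as the paper's own proof, which is exactly the derivation from (\ref{sep1}) to (\ref{eigenvalue}): separation of variables with $\Theta_n(\theta)=\exp(in\theta)$, the change of variable $x=\eta r$ turning the radial equation into Bessel's equation solved by the Frobenius series (\ref{sol1})--(\ref{sol2}), reduction of the Neumann condition to (\ref{coeff4}), and the pairwise cancellation $F_j+F_{j+1}=0$, $S_j+S_{j+1}=0$ yielding the quantisation (\ref{eigenvalue}). The two delicate points you flag---whether cancellation in one successive pair can be consistent with a single $\eta$ making the whole series in (\ref{coeff4}) vanish, and the admissibility of the singular $x^{-n}$ branch at $r=0$---are genuine, but the paper's proof leaves them equally unaddressed, simply assigning to each pair indexed by $k$ its own eigenvalue $\eta_{n,k}$.
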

It is worth noting that for a set containing $j=2k$ terms from the series solutions defining the eigenfunctions (\ref{eigenfunc}), one can only obtain $k$ eigenvalues that satisfy problem (\ref{eigen}). Therefore, it is important to realise that unlike the eigenfunctions of a diffusion operator \cite{paper38, thesis2} on a rectangular geometry, here only pairwise terms can qualify in the series solutions to be claimed as the eigenfunction satisfying problem (\ref{eigen}) and not individual terms associated discretely to the summation index $j$.
\begin{figure}[ht]
 \centering
 \small
  \begin{subfigure}[h]{.495\textwidth}
    \centering
 \includegraphics[width=\textwidth]{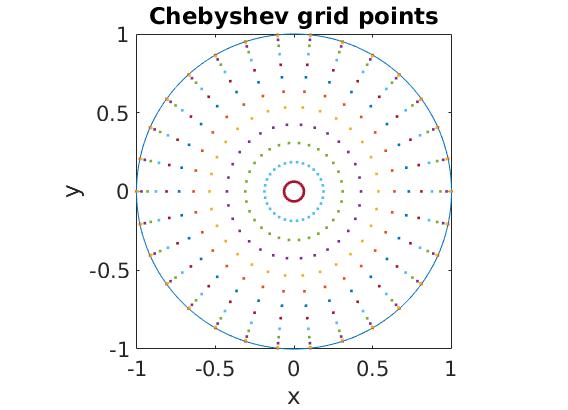}
 \caption{Coarse mesh structure of chebyshev radial\\ grid with $N=25$ and using the periodic \\Fourier grid with $M=30$, leading to \\an angular step-size of $12\si{\degree}$.}
 \label{paramfig7}
 \end{subfigure}
 \begin{subfigure}[h]{.495\textwidth}
 \centering
    \includegraphics[width=\textwidth]{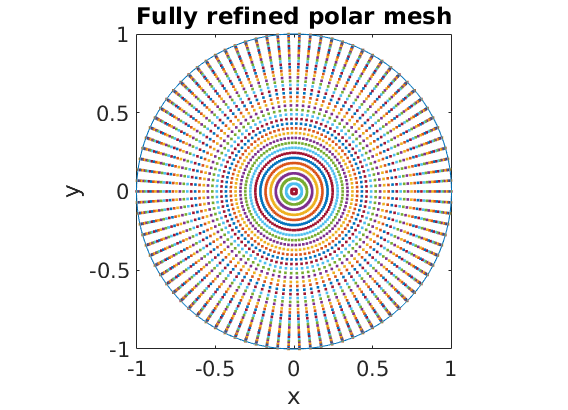}
  \caption{Fully refined polar mesh with $N=95$ chebyshev grid-points and $M=90$ Fourier periodic grid-points, with angular step-size of $4\si{\degree}$.}
  \label{paramfig9}
 \end{subfigure}
 \caption{Mesh generation on polar coordinates obtained from the combination of the chebyshev grid on radial axis and the Fourier periodic grid on angular axis.}
  \label{figmesh}
\end{figure}\subsection{Numerical experiments using the spectral method}
Let $R_{n,k}(r)$ denote the expression $R^1_{n,k}(r)+R^2_{n,k}(r)$, then the full set of eigenfunctions to problem (\ref{eigen}), can be written as $w(r,\theta)=\sum_{k=0}^{\infty}R_{n,k}(r)\Theta_n(\theta)$. For numerical demonstration of Theorem \ref{theorem1}, the spectral method \cite{book8} is employed on a unit disk centred at the origin of the $x,y$ coordinates. A spectral mesh in polar coordinates is constructed on a unit disk $\Omega=\{(r,\theta)\in\mathbb{R}^2: r\in [0,1], \theta\in[0,2\pi]\}$, where a periodic Fourier grid is applied to the angular axis $\theta$ and a non-periodic chebyshev grid is applied to the radial axis $r$ \cite{book8}. In order to tackle the singularity at $r=0$, chebyshev discretisation is applied to the whole of the diameter of $\Omega$, which means that $-1 \leq r \leq 1$ is used instead of $0\leq r \leq 1$. The interval $[-1, 1]$ is discretised in the form $r_i=\cos(\frac{i\pi}{N})$, for $i=0,1,2,...,N$, where $N=2j+1,j\in \mathbb{N}$, is a positive odd integer. The odd number of points on the chebyshev grid serves to preclude complications that could rise from the singular point $r=0$ by locating it radially in the middle of two successive chebyshev grid points. The second coordinate $\theta$ is discretised using $\theta_i=\frac{2i\pi}{M}$, for $i=0,1,2,...,M$, where $M=2j, j\in \mathbb{N}$, is a positive even integer. For the full details on the implementation of the spectral method in polar coordinates the reader is referred to \cite{book8}. Figure \ref{figmesh} (a) shows a coarse mesh structure on a unit disk using $N=25$ and $M=30$, with angular step-size of $12\si{\degree}$. Similarly, Figure \ref{figmesh} (b) shows the refined mesh using $N=95$ and $M=90$ with angular step-size of $4\si{\degree}$, on which all the simulations for eigenfunctions $w(r,\theta)$ are performed to visualise the corresponding nodal lines and surfaces.
Colour encoded plots, corresponding to nine different modes $k=\{1,2,3,5,6,7,9,12,15\}$ are simulated on the mesh given in Figure \ref{figmesh} (b) using a technique presented in \cite{paper35, paper36}. Due to the fact that eigenfunctions $w(r,\theta)$ are complex valued functions, therefore, trivial methods of visualising a real valued function of two variables do not suffice to give a meaningful representation to a complex valued function. Note that in the expression for the eigenfunctions $w(r,\theta)=R(r)\Theta(\theta)$, only the function $\Theta(\theta)$ contains the non-zero imaginary part, therefore, the variable $\theta$ is encoded by a polar colour scheme (Hue, Saturation Value) and displayed directly on $\Omega$. For full details on the implementation of this process the reader is referred to \cite{paper35, paper36}. Figure \ref{eigenfunction} shows a colour encoded representation of nine different modes, each of which corresponds to one of the nine nodal line depictions of $w_{n,k}(r,\theta)$ in Figure \ref{nodal}. 
\begin{figure}[!ht]
 \centering
 \small
  \begin{subfigure}[h]{0.3\textwidth}
 \centering
    \includegraphics[width=\textwidth]{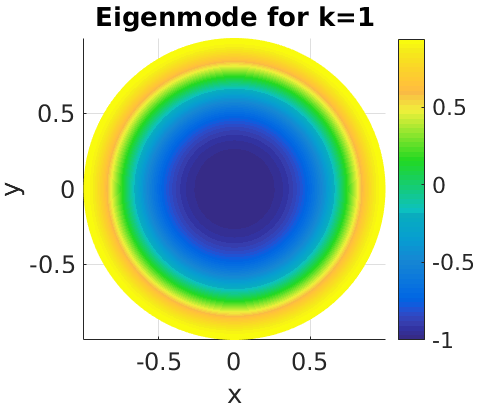}
  \caption{}
  \label{kone}
 \end{subfigure}
  \begin{subfigure}[h]{0.3\textwidth}
 \centering
 \includegraphics[width=\textwidth]{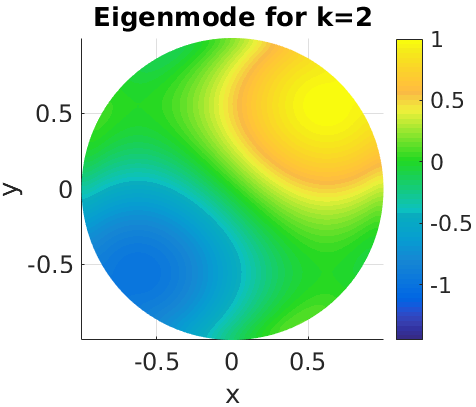}
  \caption{}
  \label{ktwo}
 \end{subfigure}
  \begin{subfigure}[h]{0.3\textwidth}
 \centering
    \includegraphics[width=\textwidth]{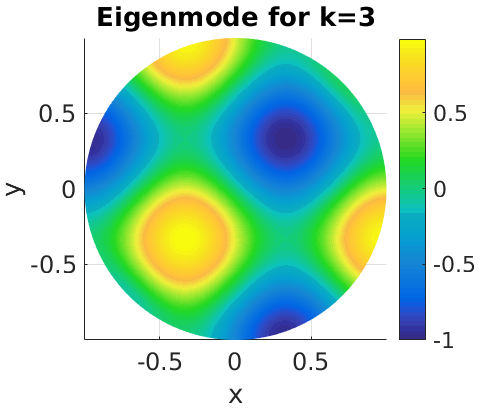}
  \caption{}
  \label{kthree}
 \end{subfigure}
 \begin{subfigure}[h]{0.320\textwidth}
 \centering
    \includegraphics[width=\textwidth]{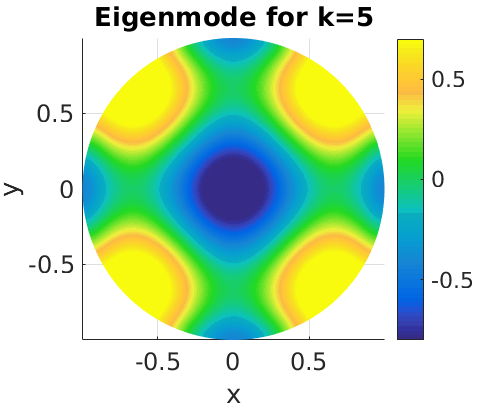}
  \caption{}
  \label{kfive}
 \end{subfigure}
  \begin{subfigure}[h]{0.3\textwidth}
 \centering
 \includegraphics[width=\textwidth]{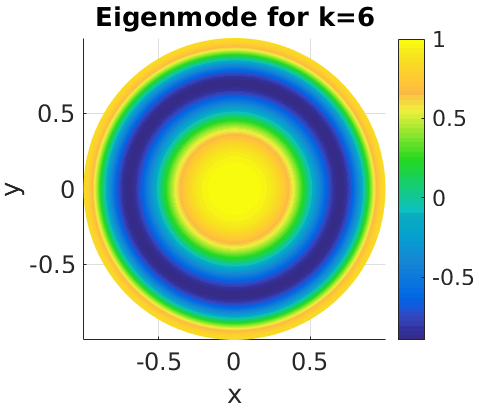}
  \caption{}
  \label{ksix}
 \end{subfigure}
  \begin{subfigure}[h]{0.3\textwidth}
 \centering
    \includegraphics[width=\textwidth]{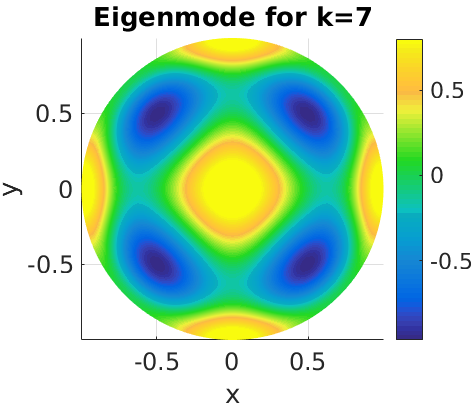}
  \caption{}
  \label{kseven}
 \end{subfigure}
 \begin{subfigure}[h]{0.3\textwidth}
 \centering
    \includegraphics[width=\textwidth]{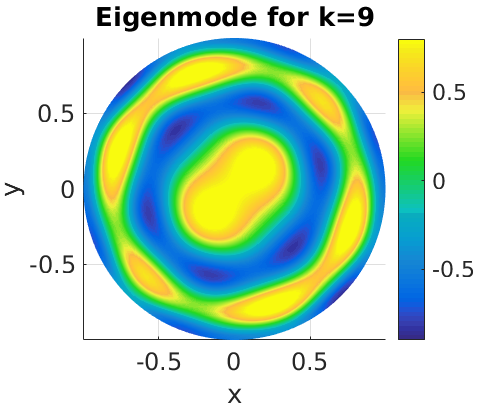}
  \caption{}
  \label{knine}
 \end{subfigure}
  \begin{subfigure}[h]{0.3\textwidth}
 \centering
 \includegraphics[width=\textwidth]{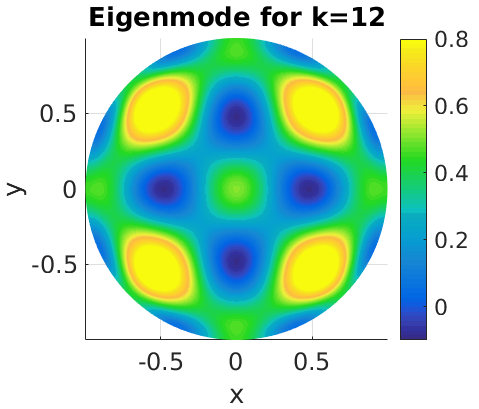}
  \caption{}
  \label{ktwelve}
 \end{subfigure}
  \begin{subfigure}[h]{0.3\textwidth}
 \centering
    \includegraphics[width=\textwidth]{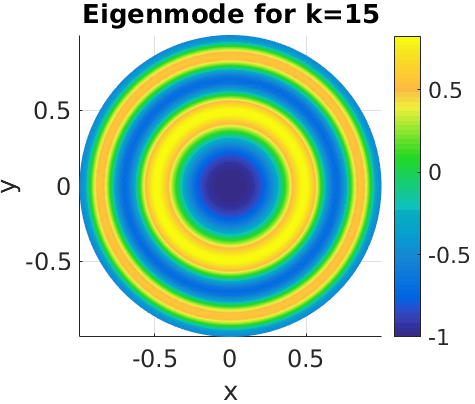}
  \caption{}
  \label{kfifteen}
 \end{subfigure}
 \caption{Colour encoded phaseplots of $w(r,\theta)$ for different values of $k$ indicated in each subtitle.}
  \label{eigenfunction}
\end{figure}
Note that in Figure \ref{nodal} different modes namely modes 2 and 3 result in the same eigenvalues that correspond to a different orientation of the nodal line depiction for $k=2$ and $k=3$. Similarly, one may find that there are a lot of such pairs of positive integers $k$ and $k+1$, that correspond to different orientations of the same nodal line depiction. 
This occurs, when the multiplicity of an eigenvalue $\eta_{n,k}$ is 2, and in fact there are numerous such pairs of $w_{n,k}(r,\theta)$ and $w_{n,k+1}(r,\theta)$, that correspond to the same $\eta_{n,k}$, but for different orientation of eigenmodes.
However applying the colour encoded representation using the (HSV) colour scheme indicates a distinction between the plots corresponding to each integer value for $k$ as shown in Figure \ref{eigenfunction}.

\begin{figure}
  \begin{center}
    \includegraphics[width=1\textwidth]{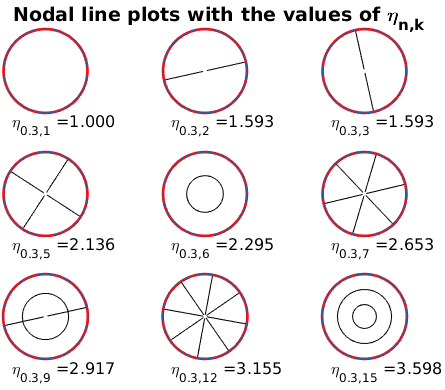}
  \end{center}
  \caption{Nodal lines depiction of the solution and the corresponding eigenvalues satisfying problem (\ref{eigen}).}
  \label{nodal}
\end{figure}
\subsection{Stability matrix and the characteristic polynomial}
The solution to system (\ref{polarsystem}) using separation of variables, can be written (with bars omitted from $\bar{u}$ and $\bar{v}$) as the product of the eigenfunctions of the diffusion operator $\triangle_p$ and $T(t)$ in the form 
\[\begin{split}
u(r,\theta,t) = \sum_{k=0}^\infty U_{n,k} \exp(\sigma_{n,k} t) R_{n,k}(r)\Theta_n(\theta),\\
v(r,\theta,t) = \sum_{k=0}^\infty V_{n,k} \exp(\sigma_{n,k} t) R_{n,k}(r)\Theta_n(\theta),
\end{split}\]
where $U_{n,k}$ and $V_{n,k}$ are the coefficients associated with the mode of the eigenfunctions in the infinite expansion. Substituting this form of solution in (\ref{vect}) and the steady state values in terms of the parameters $\alpha$ and $\beta$ for $(u_s,v_s)$, one obtains the fully linearised form of (\ref{polarsystem}) as a system of two algebraic equations of the form 
\begin{equation}
\sigma \left[\begin{array}{c}
     u \\
     v 
\end{array}\right]=-\eta_{n,k}^2\left[\begin{array}{cc}
     1&0  \\
     0&d 
\end{array}\right]\left[\begin{array}{c}
     u\\
     v 
\end{array}\right]+\gamma \left[\begin{array}{cc}
     \frac{\beta-\alpha}{\beta+\alpha}& (\beta+\alpha)^2  \\
     -\frac{2\beta}{\beta+\alpha}&-(\beta+\alpha)^2
\end{array}\right]\left[\begin{array}{c}
     u \\
     v
\end{array}\right],
\label{vect1}
\end{equation}
where $\sigma_{n,k} = \sigma$. In turn, this can be written as a two dimensional discrete eigenvalue problem of the form
\begin{equation}
\left[\begin{array}{cc}
     \gamma \frac{\beta-\alpha}{\beta+\alpha}-\eta^2_{n,k}& \gamma (\beta+\alpha)^2  \\
     -\gamma \frac{2\beta}{\beta+\alpha}&-\gamma(\beta+\alpha)^2-d\eta_{n,k}^2
\end{array}\right]\left[\begin{array}{c}
     u \\
     v
\end{array}\right]=\sigma \left[\begin{array}{c}
     u \\
     v 
\end{array}\right].
\label{vect2}
\end{equation}
The left-hand matrix in (\ref{vect2}) is referred to as the stability matrix \cite{book1, book7, paper12} for system (\ref{polarsystem}), with $\eta_{n,k}^2$ given by (\ref{eigenvalue}). In order to investigate the stability of the uniform steady state $(u_s,v_s)$, it is required to analyse the eigenvalues satisfying (\ref{vect2}), for which the characteristic polynomial takes the form of a quadratic equation in $\sigma$, written as
\begin{equation}
\left|\begin{array}{cc}
     \gamma \frac{\beta-\alpha}{\beta+\alpha}-\eta_{n,k}^2 -\sigma& \gamma (\beta+\alpha)^2  \\
     -\gamma \frac{2\beta}{\beta+\alpha}&-\gamma(\beta+\alpha)^2-d\eta_{n,k}^2-\sigma
\end{array}\right|=0.
\label{vect4}
\end{equation}
Let $\mathcal{T}(\alpha,\beta)$ and $\mathcal{D}(\alpha,\beta)$ respectively denote the trace and determinant of the stability matrix given by (\ref{vect2}), then the quadratic polynomial (\ref{vect4}) can be written in terms of $\mathcal{T}$ and $\mathcal{D}$ as
\begin{equation}
\sigma^2 - \mathcal{T}(\alpha, \beta) \sigma + \mathcal{D}(\alpha, \beta) = 0,
\label{charac}
\end{equation}
with $\mathcal{T}(\alpha, \beta)$ and $\mathcal{D}(\alpha, \beta)$ expressed by  
\begin{equation}\begin{cases}
\mathcal{T}(\alpha, \beta) =& \gamma \frac{\beta - \alpha  - (\beta +\alpha)^3}{\beta +\alpha}-(d+1)\eta_{n,k}^2,\\
\mathcal{D}(\alpha, \beta) =& \big(\gamma\frac{\beta-\alpha}{\beta+\alpha}-\eta_{n,k}^2\big)\big(-\gamma(\beta +\alpha)^2-(d+1)\eta_{n,k}^2\big)+2\gamma^2\beta(\beta+\alpha).
\end{cases}\label{detrace}\end{equation}
The roots of equation (\ref{charac}) in terms of $\mathcal{T}$ and $\mathcal{D}$ are $\sigma_{1,2}=\frac{\mathcal{T}\pm\sqrt{\mathcal{T}^2-4\mathcal{D}}}{2}$. Stability of the uniform steady state $(u_s,v_s)$ is determined by the signs of the two roots namely $\sigma_{1,2}$ if they are real. However, if $\sigma_{1,2}$ are a complex conjugate pair (with non-zero imaginary parts), then the sign of the real part is sufficient to predict the stability of the uniform steady state $(u_s,v_s)$. Since there are always exactly two roots on a complex plane for a quadratic equation, therefore, it is impossible with the same choice of parameters $\alpha$, $\beta$, $d$, $\gamma$ and $\eta^2_{n,k}$, for $\sigma_{1}$ to be real and $\sigma_2$ to be complex or vice versa. Therefore, a reasonable approach to encapsulate all the possibilities for the stability and types of the uniform steady state $(u_s,v_s)$ in light of parameters $\alpha$ and $\beta$ is to consider the cases when  $\sigma_{1,2}\in\mathbb{C}\backslash\mathbb{R}$ and $\sigma_{1,2}\in\mathbb{R}$. In each case the parameter space is rigorously analysed and the classification of the parameter plane $(\alpha,\beta)\in \mathbb{R}_+^2$ in relation to the diffusion parameter $d$ is studied. In light of such classification, the analysis is further extended to explore the effects of domain size on the existence of regions in parameter space that correspond to spatial and/or temporal bifurcations.
\section{Parameter spaces and bifurcation analysis}\label{bifur}
Bifurcation analysis of system (\ref{polarsystem}) is better conducted when the parameter plane $(\alpha,\beta)\in \mathbb{R}_+^2$ is appropriately partitioned for both of the cases when $\sigma_{1,2}\in\mathbb{C}\backslash\mathbb{R}$ as well as when $\sigma_{1,2}\in\mathbb{R}$. To obtain such a partition on the parameter plane, it is required to find the equations of the partitioning curves and these can be found through a detailed analysis of the expression for $\sigma_{1,2}$, which in turn requires to explore the domains of $\mathcal{T}$ and $\mathcal{D}$. 
\subsection{Equations of the partitioning curves}
Starting with the curve on the parameter plane $(\alpha,\beta)\in \mathbb{R}_+^2$ that forms a boundary for the region that corresponds to eigenvalues $\sigma_{1,2}$ containing non-zero imaginary part. It must be noted that the only possibility through which $\sigma_{1,2}$ can have a non-zero imaginary part is if the inequality $\mathcal{T}^2-4\mathcal{D}<0$ is true. It means that those parameter values $\alpha$ and $\beta$ satisfying the equation $\mathcal{T}^2(\alpha,\beta)=4\mathcal{D}(\alpha,\beta)$ must be lying on a partitioning curve that determines the boundary between the region on the parameter plane that corresponds to eigenvalues with non-zero imaginary part and that which corresponds to a pair of real eigenvalues. We therefore, state that the set of points on the parameter plane $(\alpha,\beta)\in \mathbb{R}_+^2$ satisfying the implicit equation
\begin{equation}\begin{split}
\Big(\gamma \frac{\beta - \alpha  - (\beta +\alpha)^3}{\beta +\alpha}-(d+1)\eta_{n,k}^2\Big)^2=&4\Big(\big(\gamma\frac{\beta-\alpha}{\beta+\alpha}-\eta_{n,k}^2\big)\big(-\gamma(\beta +\alpha)^2\\
&-(d+1)\eta_{n,k}^2\big)+2\gamma^2\beta(\beta+\alpha)\Big),
\label{part1}
\end{split}\end{equation}
forms the partitioning curve between the region that corresponds to a real pair of $\sigma_{1,2}$ and that corresponding to a complex conjugate pair of $\sigma_{1,2}$. For the solution of (\ref{part1}) refer to Section \ref{main}, where a numerical method is employed to find combinations of $\alpha,\beta\in\mathbb{R}_+$ on the plane $(\alpha,\beta)\in \mathbb{R}_+^2$ satisfying (\ref{part1}). It is worth noting that a combination of $\alpha$ and $\beta$ satisfying (\ref{part1}), entails that the expression for $\sigma_{1,2}$ possesses a repeated real root of the form $\sigma_1=\sigma_2=\frac{1}{2}\mathcal{T}$, whose sign will require to be explored in Section \ref{main} for real eigenvalues. Another important point to be made about the curve satisfying (\ref{part1}) is that it also partitions the parameter space for regions of spatial and temporal bifurcations. Because on one side of curve (\ref{part1}), both of the eigenvalues $\sigma_{1,2}$ are always real, which can never excite temporal instability of the uniform steady state $(u_s,v_s)$, whereas on the other side, the eigenvalues $\sigma_{1,2}$ are always a complex conjugate pair, which can never excite spatial instability. Therefore, on the side where $\sigma_{1,2}$ are a pair of real values, any instability that occurs in the dynamics of system (\ref{polarsystem}) will be strictly relevant to spatial variation, hence any pattern that system (\ref{polarsystem}) can evolve to will be strictly spatial with stable and invariant evolution in time. However, on the side where $\sigma_{1,2}$ have non-zero imaginary part, every possible instability in the dynamics of system (\ref{polarsystem}) will be strictly concerned with temporal periodicity, therefore any pattern that emerges from the dynamics of system (\ref{polarsystem}) is expected to be periodic along the time axis. This analysis raises the interesting question whether it is possible for the dynamics of (\ref{polarsystem}) to cause a spatially periodic pattern to undergo an unstable temporal periodicity as well? To answer this question, on the first sight it sounds absurd to claim on one hand that a specific choice of admissible $(\alpha,\beta)$ can either cause spatial or temporal instability, and not both at the same time, which is an intuitive claim to make. Because, any admissible choice of $(\alpha,\beta)$ either yields a pair of real $\sigma_{1,2}$ or a complex conjugate pair of $\sigma_{1,2}$, therefore, a fixed choice of $(\alpha,\beta)$ is not expected to yield a real $\sigma_1$ and a complex $\sigma_2$ or vice versa. Therefore, it is intuitive to presume that system (\ref{polarsystem}) should not admit the evolution of such dynamics, in which a combination of spatial and temporal instabilities can occur. However, the current study finds that this counter-intuitive behaviour is possible for system (\ref{polarsystem}) to exhibit, which is related to the existence and shift in the location of a partitioning curve on which the real part of $\sigma_{1,2}\in\mathbb{C}\backslash\mathbb{R}$ becomes zero during the course of the temporal evolution. Such a curve is referred to as the \textit{transcritical curve}, for which the implicit equation in terms of admissible $\alpha$ and $\beta$ is of the form
\begin{equation}
\gamma \frac{\beta - \alpha  - (\beta +\alpha)^3}{\beta +\alpha}=(d+1)\eta_{n,k}^2,
\label{part2}
\end{equation}
under the assumption that $\mathcal{T}^2(\alpha,\beta)-4\mathcal{D}(\alpha,\beta)<0$. It must be noted, that equations (\ref{part1}) and (\ref{part2}) are the only two equations that fully partition the parameter plane $(\alpha,\beta)\in \mathbb{R}_+^2$. However, one may expect the number of partitions subject to the types and stability of the uniform steady state $(u_s,v_s)$, on the parameter plane $(\alpha,\beta)\in \mathbb{R}_+^2$ to be four regions separated by three curves. These regions are obtained by sub-partitioning the region corresponding to a pair of real $\sigma_{1,2}$, into two sub-regions where $\sigma_{1,2}$ is a pair of real negative values and another where at least $\sigma_1$ or $\sigma_2$ is positive. The region on the parameter plane $(\alpha,\beta)\in \mathbb{R}_+^2$, that corresponds to complex eigenvalues can also be sub-divided into two regions, one where $\sigma_{1,2}$ is a complex conjugate pair with negative real part, and the other where $\sigma_{1,2}$ is a complex conjugate pair, but with a positive real part. The numerical solution of equation (\ref{part1}) will reveal that the region corresponding to complex eigenvalues is in fact bounded by two curves in the parameter plane $(\alpha,\beta)\in \mathbb{R}_+^2$, each of which satisfies (\ref{part1}). Therefore, (\ref{part1}) is implicitly the equation of two partitioning curves instead of one, which including (\ref{part2}) makes a total of three curves dividing the admissible parameter space into four different regions. It is worth bearing in mind, that subject to the types and stability of the uniform steady state, one obtains at most four regions separated by three curves, with the possibility that parameters such as $\gamma$, $d$ and $\eta_{n,k}^2$ may induce equations (\ref{part1}) and (\ref{part2}) in such a way that the number of partitioning curves may become less than three in total. Consequently this entails that a region corresponding to a certain type of bifurcation may completely disappear from the admissible parameter space. This kind of influence on the location and existence of the partitioning curves is quantitatively investigated, in particular, the effect of $\rho$ embedded in the expression for $\eta_{n,k}^2$ in (\ref{eigenvalue}) is explored to analyse its influence on the bifurcation of the uniform steady state.
\subsubsection{Analysis for the case of complex eigenvalues}
Before determining the region with complex eigenvalues on the admissible parameter plane $(\alpha,\beta)\in \mathbb{R}_+^2$ using a numerical treatment of (\ref{part1}), the real part of $\sigma_{1,2}$ is investigated analytically, when it is a complex conjugate pair. It can be noted that $\sigma_{1,2}$ can only become a pair of complex roots, if $(\alpha,\beta)$ satisfies the inequality
\begin{equation}
\mathcal{T}^2(\alpha,\beta)-4\mathcal{D}(\alpha,\beta)<0.
\label{comp}
\end{equation}
Given that (\ref{comp}) is satisfied, then the stability of the uniform steady steady state $(u_s,v_s)$ is decided purely by the sign of the real part of $\sigma_{1,2}$, which is the expression
\begin{equation}
\text{Re}(\sigma_{1,2})=\frac{1}{2}\Big(\gamma\frac{\beta-\alpha-(\beta +\alpha)^3}{\beta+\alpha}-(d+1)\eta_{n,k}^2\Big).
\label{real}
\end{equation}
If the sign of the expression given by (\ref{real}) is negative, simultaneously with assumption (\ref{comp}) satisfied, then it can be predicted that no choice of parameters can cause temporal instability in the dynamics of system (\ref{polarsystem}). Therefore, under the assumption (\ref{comp}), if the dynamics of system (\ref{polarsystem}) do exhibit diffusion-driven instability, it will be restricted to spatially periodic behaviour only, which uniformly converges to a temporal steady state, consequently one obtains spatial pattern that is invariant in time. The sign of the expression given in (\ref{real}) is further investigated to derive from it, relations between the parameter $\rho$ controlling the domain size and reaction-diffusion rates denoted by $\gamma$ and $d$ respectively. Given that assumption (\ref{comp}) is satisfied then the sign of expression (\ref{real}) is negative if parameters $\alpha$, $\beta$, $\gamma$ and $d$ satisfy the inequality 
\begin{equation}
\frac{\beta - \alpha  - (\beta +\alpha)^3}{\beta +\alpha}<\frac{(d+1)\eta_{n,k}^2}{\gamma},
\label{comp1}
\end{equation}
with $\eta_{n,k}^2$ defined by (\ref{eigenvalue}). Note that the expression on the left hand-side of (\ref{comp1}) is a bounded quantity by the constant value of 1 \cite{paper38}, for all the admissible choices of $(\alpha,\beta)\in\mathbb{R}_+$, therefore, substituting for $\eta_{1,2}^2$ in expression (\ref{eigenvalue}) and rearranging, it can be obtained that for inequality (\ref{comp1}) to remain true, it induces a restriction on the value of $\rho^2$, which is of the form 
\begin{equation}
\rho^2<\frac{4(d+1)(2k+1)(n+2k+1)(n+4k)}{\gamma(n+4k+2)}.
\label{comp2}
\end{equation}
Inequality (\ref{comp2}) conversely implies that so long as the radius $\rho$ of the disk shape domain $\Omega$ satisfies (\ref{comp2}), then the dynamics of system (\ref{polarsystem}) is guaranteed to exhibit global temporal stability, which also means that any possible instability in the dynamics must be restricted to spatial periodicity or spatial pattern. The formal proof of this claim is presented in Theorem \ref{theorem2}. This type of instability concerning space and not time is referred to as \textit{Turing instability} \cite{book1, book7}. On the other hand assuming that (\ref{comp}) is satisfied and using the upper bound of the quantity on the left hand-side of (\ref{comp1}) with $\eta_{n,k}^2$ as defined in (\ref{eigenvalue}), it can be shown that a necessary condition for the sign of expression (\ref{real}) to become positive is for $\rho$ to satisfy the inequality 
\begin{equation}
 \rho^2\geq\frac{4(d+1)(2k+1)(n+2k+1)(n+4k)}{\gamma(n+4k+2)}.
 \label{comp3}
\end{equation}
Conditions (\ref{comp2}) and (\ref{comp3}) both have quantitative influence on the location and topology of the partitioning curves obtained from the numerical solutions of (\ref{part1}) and (\ref{part2}) in the admissible parameter plane, namely  $(\alpha,\beta)\in \mathbb{R}_+^2$. System (\ref{polarsystem}) is restricted from any type of temporal bifurcation if the radius $\rho$ of the disk shape domain $\Omega$ is related to the reaction-diffusion parameters $\gamma$ and $d$, through inequality (\ref{comp2}). It also means that on a disk shape domain $\Omega$ with radius $\rho$ satisfying (\ref{comp2}), the dynamics of system (\ref{polarsystem}), either exhibit spatially periodic pattern or no pattern at all, both of which are globally stable in time. If the dynamics of a system become unstable along the time axis and exhibits temporal periodicity, then the system is said to undergo \textit{Hopf bifurcation} \cite{book12, book1, paper39, paper2}. If the real part of a pair of complex eigenvalues become zero, then the system is expected to exhibit oscillations with orbital periodicity. This behaviour is known as \textit{transcritical bifurcation} \cite{book1, paper2, paper39}. The consequences of the restriction on $\rho$ in the sense of bifurcation analysis means that, whenever the radius $\rho$ of a disk-shape domain is bounded by (\ref{comp2}) in terms of $\gamma$ and $d$, then the dynamics of system (\ref{polarsystem}) is guaranteed to forbid \textit{Hopf} and \textit{transcritical bifurcations}, only allowing for \textit{Turing instability} to occur. If system (\ref{polarsystem}) allows only Turing type instability to occur under condition (\ref{comp2}), it indicates that the eigenvalues $\sigma_{1,2}$ only become positive, when they are a pair of real values with zero imaginary parts. Therefore, it is also an indication that diffusion-driven instability is still possible, but it just becomes strictly spatial with (\ref{comp2}) satisfied. If the values of parameters $\gamma$ and $d$ are chosen such that inequality (\ref{comp3}) is satisfied, then the possibility of all three types of diffusion-driven instabilities exist on the admissible parameter plane  $(\alpha,\beta)\in \mathbb{R}_+^2$, namely Turing, Hopf and transcritical types of bifurcations. The influence from the area of $\Omega$ through the relationship (\ref{comp3}) of $\rho$ with the reaction-diffusion rates namely $\gamma$ and $d$ is summarised in Theorem \ref{theorem2} with a detailed sketch of the proof. 
\begin{theorem}[Hopf or transcritical bifurcation]
 Let $u$ and $v$ satisfy the non-dimensional reaction-diffusion system with {\it activator-depleted} reaction kinetics (\ref{polarsystem}) on a disk-shape domain $\Omega \subset \mathbb{R}^2$ with radius $\rho$ and positive real parameters $\gamma$, $d$, $\alpha$ and $\beta$.
 For the system to exhibit Hopf or transcritical bifurcation in the neighbourhood of the unique steady state $(u_s,v_s)=\big(\alpha+\beta, \frac{\beta}{(\alpha+\beta)^2}\big)$, the necessary condition on the radius $\rho$ of the disk-shape domain $\Omega \subset \mathbb{R}^2$ is that it must be sufficiently large satisfying
 \begin{equation}
 \rho\geq2\sqrt{\frac{(d+1)(2k+1)(n+2k+1)(n+4k)}{\gamma(n+4k+2)}},
 \label{comp4}
\end{equation}
 where $n\in\mathbb{R}\backslash\frac{1}{2}\mathbb{Z}$ is the associated order of the Bessel's equations and $k$ is any positive integer.
 \label{theorem2}
 \end{theorem}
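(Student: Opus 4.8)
The plan is to prove the bound as a \emph{necessary} condition by contraposition: I will show that whenever $\rho$ fails (\ref{comp4}), the real part of the complex conjugate pair $\sigma_{1,2}$ is forced to be strictly negative for every admissible $(\alpha,\beta)\in\mathbb{R}_+^2$, so that neither a Hopf nor a transcritical bifurcation of $(u_s,v_s)$ can occur. First I would recall that both bifurcation types live in the regime where $\sigma_{1,2}$ is a genuine complex conjugate pair, i.e.\ where assumption (\ref{comp}) holds; in that regime the real part is exactly $\tfrac12\mathcal{T}$, given by (\ref{real}). A Hopf bifurcation requires $\mathrm{Re}(\sigma_{1,2})$ to cross zero into the positive half-line and a transcritical bifurcation requires $\mathrm{Re}(\sigma_{1,2})=0$; in either case a prerequisite is the existence of admissible $(\alpha,\beta)$ at which
\[
\gamma\,\frac{\beta-\alpha-(\beta+\alpha)^3}{\beta+\alpha}\;\geq\;(d+1)\,\eta_{n,k}^2 .
\]

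The second step is the key analytic input: the rational expression $\frac{\beta-\alpha-(\beta+\alpha)^3}{\beta+\alpha}$ is bounded above by $1$ on the whole admissible quadrant. I would justify this by writing it as $\frac{\beta-\alpha}{\beta+\alpha}-(\beta+\alpha)^2$, observing that the first summand lies in $(-1,1)$ and the second is strictly negative, so the total is $<1$, with supremum $1$ approached as $\alpha,\beta\to0^+$ with $\beta/\alpha\to\infty$. Feeding this bound into the displayed inequality shows that a necessary condition for $\mathrm{Re}(\sigma_{1,2})\geq0$ to be attainable by some admissible $(\alpha,\beta)$ is
\[
(d+1)\,\eta_{n,k}^2\;\leq\;\gamma,\qquad\text{i.e.}\qquad \eta_{n,k}^2\;\leq\;\frac{\gamma}{d+1}.
\]

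Finally I would substitute the closed form $\eta_{n,k}^2=\frac{4(2k+1)(n+2k+1)(n+4k)}{\rho^2(n+4k+2)}$ from (\ref{eigenvalue}), valid for fixed $n\in\mathbb{R}\backslash\frac{1}{2}\mathbb{Z}$ and positive integer $k$ by Theorem~\ref{theorem1}, and solve the resulting inequality for $\rho$. This rearranges directly to $\rho^2\geq\frac{4(d+1)(2k+1)(n+2k+1)(n+4k)}{\gamma(n+4k+2)}$, which is precisely (\ref{comp4}) after taking square roots.

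I expect the only delicate point to be the upper bound of the second step together with the fact that the supremum $1$ is not attained: because the bound is strict on the open quadrant, one must argue the contrapositive carefully, namely that violating (\ref{comp4}) yields (\ref{comp2}) and hence $\eta_{n,k}^2>\gamma/(d+1)$, so that $\gamma\,h(\alpha,\beta)<\gamma<(d+1)\eta_{n,k}^2$ and $\mathrm{Re}(\sigma_{1,2})<0$ \emph{uniformly} over all admissible $(\alpha,\beta)$, which cleanly precludes both bifurcations. A minor bookkeeping point is to note that the factors in (\ref{eigenvalue}) keep $\eta_{n,k}^2$ positive, so that multiplying through by $\rho^2$ and dividing by the positive constant preserves the direction of the inequality throughout the rearrangement.
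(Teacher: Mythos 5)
Your proposal is correct and follows essentially the same route as the paper's proof: both hinge on bounding $\frac{\beta-\alpha-(\beta+\alpha)^3}{\beta+\alpha}$ above by its supremum $1$ over the positive quadrant (the paper splits it as $f_1-f_2$ with $f_1=\frac{\beta}{\alpha+\beta}$, $f_2=\frac{\alpha+(\alpha+\beta)^3}{\alpha+\beta}$, which is the same decomposition you use written differently), then substituting the closed form of $\eta_{n,k}^2$ and rearranging for $\rho$. Your contrapositive packaging, and your explicit note that the supremum is not attained so that violating (\ref{comp4}) forces $\mathcal{T}(\alpha,\beta)<0$ uniformly, is a slightly more careful rendering of the same argument rather than a different one.
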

\begin{proof}[Proof:]
For system (\ref{polarsystem}) to exhibit \textit{Hopf} or \textit{transcritical} bifurcations the eigenvalues of the stability matrix (\ref{vect2}) must have non-zero imaginary part with non-negative real part.
Consider the real part of $\sigma_{1,2}$, which is precisely given by $\frac{1}{2}\mathcal{T}(\alpha,\beta)$ under the assumption that the admissible choice of parameters $\alpha,\beta\in\mathbb{R}_+$ satisfies (\ref{comp}). When $\sigma_{1,2}\in\mathbb{C}\backslash\mathbb{R}$, then the stability of the uniform steady state $(u_s,v_s)=\big(\alpha+\beta, \frac{\beta}{(\alpha+\beta)^2}\big)$ is precisely determined by the sign of $\mathcal{T}(\alpha,\beta)$, which is given by 
\begin{equation}
\mathcal{T}(\alpha, \beta) = \gamma \frac{\beta - \alpha  - (\beta +\alpha)^3}{\beta +\alpha}-(d+1)\eta_{n,k}^2.
\label{real1}
\end{equation}
System (\ref{polarsystem}) undergoes Hopf or transcritical bifurcation if $\mathcal{T}(\alpha,\beta)\geq0$, given that the strict inequality (\ref{comp}) is satisfied, which can only hold true if $\mathcal{D}(\alpha,\beta)>0$. In (\ref{real1}) $\eta_{n,k}^2$ is given by
\begin{equation}
\eta_{n,k}^2=\frac{4(2k+1)(n+2k+1)(n+4k)}{\rho^2(n+4k+2)},
\label{eigenvalue1}
\end{equation}
where $n\in\mathbb{R}\backslash\frac{1}{2}\mathbb{Z}$ is the order of the associated Bessel's equation and $k\in\mathbb{N}$ is any positive integer. To show the condition on $\rho$ for Hopf or transcritical bifurcation, one may substitute (\ref{eigenvalue1}) into (\ref{real1}) and requiring the resulting quantity to be non-negative, which yields the inequality
\begin{equation}
\gamma \frac{\beta - \alpha  - (\beta +\alpha)^3}{\beta +\alpha}\geq\frac{4(d+1)(2k+1)(n+2k+1)(n+4k)}{\rho^2(n+4k+2)}.
\label{equatzero1}
\end{equation}
Noting that the left hand-side of (\ref{equatzero1}) can be written as the difference between two non-negative functions $f_1(\alpha,\beta)$ and $f_2(\alpha,\beta)$ in the form $\gamma\big(f_1(\alpha,\beta)-f_2(\alpha,\beta)\big)$, where $f_1$ and $f_2$ are given by
\begin{equation}
f_1(\alpha,\beta)=\frac{\beta}{\alpha+\beta}, \qquad f_2(\alpha,\beta)=\frac{\alpha+(\alpha+\beta)^3}{\alpha+\beta}.
\label{f1f2}
\end{equation}
Note also that $\rho^2$ resides in the denominator of the right hand-side of (\ref{equatzero1}) and parameter $\gamma$ is multiplied by the expression on the left hand-side. In order to find what this inequality induces on the relationship between parameters $\gamma$, $d$ and $\rho$, it is essential to analyse the supremum and infemum of $f_1(\alpha,\beta)$ and $f_2(\alpha,\beta)$ within their respective domains which is $(\alpha,\beta)\in[0,\infty)\times[0,\infty)$. The range for $f_1(\alpha,\beta)$ and $f_2(\alpha,\beta)$ are independently analysed to find the supremum of the expression on the left of (\ref{equatzero1}). Starting with $f_1(\alpha,\beta)$, which is bounded below and above in the domain $(\alpha,\beta)\in[0,\infty)\times [0,\infty)$, we have
$\sup_{\alpha,\beta \in \mathbb{R}_+} f_1(\alpha,\beta) = 1$, and 
the $\inf_{\alpha,\beta \in \mathbb{R}_+} f_1(\alpha,\beta) = 0$ for all $ \alpha,\beta \in \mathbb{R}_+.$
Similarly considering the expression for $f_2(\alpha,\beta)$, we have
$\sup_{\alpha,\beta \in \mathbb{R}_+} f_2(\alpha,\beta) = \infty, $ and 
the $\inf_{\alpha,\beta \in \mathbb{R}_+} f_2(\alpha,\beta) = 0$, for all $\alpha, \beta \in \mathbb{R}_+.$
Since the ranges of both $f_1(\alpha,\beta)$ and $f_2(\alpha,\beta)$ are non-negative within their respective domains, therefore the supremum of their difference is determined by the supremum of the function with positive sign, which is $\sup_{\alpha,\beta\in\mathbb{R}_+}f_1(\alpha,\beta)=1$. Therefore, inequality (\ref{equatzero1}) takes the form
\[\begin{split}
\frac{4(d+1)(2k+1)(n+2k+1)(n+4k)}{\rho^2(n+4k+2)}&\leq\gamma\frac{\beta - \alpha  - (\beta +\alpha)^3}{\beta +\alpha} \\
& \leq \gamma \sup_{\alpha,\beta\in \mathbb{R}_+} \big(f_1(\alpha,\beta)-f_2(\alpha,\beta)\big) \\
&=\gamma \sup_{\alpha,\beta \in \mathbb{R}_+} f_1(\alpha,\beta)  = \gamma,
\end{split}\]
which by rearranging and writing the inequality for $\rho$ in terms of everything else, yields the desired statement of Theorem \ref{theorem2}, which is condition (\ref{comp4}).
\end{proof}
The claim of Theorem \ref{theorem2}, is also numerically verified by showing that a region in the admissible parameter plane that corresponds to Hopf or transcritical bifurcations emerges only if radius $\rho$ of a disk shape domain $\Omega$ is sufficiently large satisfying inequality (\ref{comp4}). Otherwise, no choice of parameters exist in the admissible parameter plane $(\alpha,\beta)\in\mathbb{R}_+^2$, allowing the dynamics of (\ref{polarsystem}) to exhibit Hopf or transcritical bifurcation.

\subsubsection{Analysis for the case of real eigenvalues}
The eigenvalues $\sigma_{1,2}$ are both real if the discriminant of the roots is either zero or positive, which in turn means that the relationship between $\mathcal{T}(\alpha,\beta)$ and $\mathcal{D}(\alpha,\beta)$ is such that 
\begin{equation}
\mathcal{T}^2(\alpha,\beta) \geq 4\mathcal{D}(\alpha,\beta).
\label{condreal}
\end{equation}
The equal case of (\ref{condreal}) is looked at first, where we have
\begin{equation}
\mathcal{T}^2(\alpha,\beta) = 4\mathcal{D}(\alpha,\beta),
\label{condequal}
\end{equation}
which implies that the discriminant is zero, hence the roots are repeated real values of the form $\sigma_1 = \sigma_2\in\mathbb{R}$, given by
\begin{equation}
\sigma_1=\sigma_2 = \frac{1}{2}\Big(\gamma\frac{\beta - \alpha  - (\beta +\alpha)^3}{\beta +\alpha}-\frac{4(d+1)(2k+1)(n+2k+1)(n+4k)}{\rho^2(n+4k+2)}\Big).
\label{repeated}
\end{equation}
When $\alpha$ and $\beta$ satisfy condition (\ref{condequal}), the stability of the steady state is determined by the sign of the root itself. 
The expression given by (\ref{repeated}) can be easily shown to be negative if the radius $\rho$ of the disk-shape domain $\Omega$ satisfies the inequality
\begin{equation}
\rho<2\sqrt{\frac{(\alpha+\beta)(d+1)(2k+1)(n+2k+1)(n+4k)}{\gamma(\beta-\alpha-(\alpha+\beta)^3)(n+4k+2)}}.
\label{repeatneg}
\end{equation}
 Otherwise, the repeated root is positive provided that $\rho$ satisfies
\begin{equation}
\rho>2\sqrt{\frac{(\alpha+\beta)(d+1)(2k+1)(n+2k+1)(n+4k)}{\gamma(\beta-\alpha-(\alpha+\beta)^3)(n+4k+2)}}.
\label{repeatpos}
\end{equation}
Analysing (\ref{repeatneg}) and (\ref{repeatpos}) carefully, it can be observed that the only terms that can possibly invalidate the inequalities are in the denominator of the right hand-side, namely the expression $\beta-\alpha-(\beta+\alpha)^3$. Therefore, a restriction is required to be stated on this term to ensure that the radius $\rho$ of $\Omega$ is not compared against an imaginary number, such a restriction is 
\begin{equation}
\beta > \alpha+(\beta+\alpha)^3.
\label{rest}
\end{equation}
%\subsubsection{Remark}
It must be noted that (\ref{rest}) is the same restriction on the parameter choice obtained for the case of repeated real eigenvalues in the absence of diffusion \cite{paper38}. By further comparing with (\ref{repeatpos}), it can be noted that it is very similar to condition (\ref{comp4}) of Theorem \ref{theorem2}, except that (\ref{comp4}) is free from any dependence of the parameters $\alpha$ and $\beta$. This makes (\ref{comp4}) a sharper version of (\ref{repeatpos}) in the sense, that the curve satisfying (\ref{condequal}) subject to condition (\ref{repeatpos}) must be the one forming the boundary of the region on the admissible parameter plane, that corresponds to complex eigenvalues $\sigma_{1,2}$ with positive real parts, which is the region for Hopf bifurcation. Therefore, the region of the admissible parameter plane that corresponds to Hopf bifurcation is on one side bounded by curve (\ref{condequal}) and on the other side it is bounded by the curve satisfying (\ref{part2}) under their respective assumptions. In Section \ref{main} it is verified to be the case by the numerical computation of the partitioning curve (\ref{part1}), which is the same curve (\ref{condequal}). This analysis motivates to explore the possibility of similar comparison between the conditions (\ref{repeatneg}) and (\ref{comp2}). A reasonable intuition behind this comparison is that the sub-region on the admissible parameter plane that corresponds to complex eigenvalues with negative real parts must be bounded by curve (\ref{condequal}) subject to condition (\ref{repeatneg}), outside of which every possible choice of parameters $\alpha$ and $\beta$ will guarantee the eigenvalues $\sigma_{1,2}$ to be a pair of distinct real values, which promotes the necessity to state and prove Theorem \ref{theorem3}.
\begin{theorem}[Turing type diffusion-driven instability]
Let $u$ and $v$ satisfy the non-dimensional reaction-diffusion system with {\it activator-depleted} reaction kinetics (\ref{polarsystem}) on a disk-shape domain $\Omega \subset \mathbb{R}^2$ with radius $\rho$ and positive real parameters $\gamma$, $d$, $\alpha$ and $\beta$.
 Given that the radius $\rho$ of domain $\Omega \subset \mathbb{R}^2$ satisfies the inequality 
 \begin{equation}
     \rho<2\sqrt{\frac{(d+1)(2k+1)(n+2k+1)(n+4k)}{\gamma(n+4k+2)}},
     \label{condtur}
 \end{equation}
 where $n\in\mathbb{R}\backslash\frac{1}{2}\mathbb{Z}$ is the associated order of the Bessel's equations and $k$ is any positive integer, then for all $\alpha, \beta \in \mathbb{R}_+$ in the neighbourhood of the unique steady state $(u_s,v_s)=\big(\alpha+\beta, \frac{\beta}{(\alpha+\beta)^2}\big)$ the diffusion driven instability is restricted to Turing type only, forbidding the existence of Hopf and transcritical bifurcations.
 \label{theorem3}
 \end{theorem}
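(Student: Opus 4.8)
The plan is to recognise that this statement is essentially the logical contrapositive of Theorem~\ref{theorem2}. Whereas Theorem~\ref{theorem2} extracted a \emph{necessary} lower bound on $\rho$ for the trace $\mathcal{T}(\alpha,\beta)$ of the stability matrix to become non-negative, here I would show that the reverse strict inequality (\ref{condtur}) forces $\mathcal{T}(\alpha,\beta)<0$ \emph{uniformly} over the whole admissible plane $(\alpha,\beta)\in\mathbb{R}_+^2$. Since, under assumption (\ref{comp}), the real part of a complex conjugate pair $\sigma_{1,2}$ is exactly $\frac{1}{2}\mathcal{T}(\alpha,\beta)$, a strictly negative trace prevents any crossing of the imaginary axis and therefore forbids both Hopf and transcritical bifurcations, leaving Turing type as the only admissible diffusion-driven instability.

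First I would square (\ref{condtur}) to obtain its equivalent form (\ref{comp2}), and then rearrange using the explicit expression (\ref{eigenvalue1}) for $\eta_{n,k}^2$ to conclude that
\[
(d+1)\eta_{n,k}^2=\frac{4(d+1)(2k+1)(n+2k+1)(n+4k)}{\rho^2(n+4k+2)}>\gamma .
\]
Next I would invoke the supremum estimate already established in the proof of Theorem~\ref{theorem2}: writing the reaction contribution to the trace as $\gamma\big(f_1(\alpha,\beta)-f_2(\alpha,\beta)\big)$ with $f_1,f_2$ as in (\ref{f1f2}), and using $f_2\geq 0$ together with $\sup_{\alpha,\beta\in\mathbb{R}_+}f_1(\alpha,\beta)=1$, yields the uniform bound
\[
\gamma\frac{\beta-\alpha-(\beta+\alpha)^3}{\beta+\alpha}\leq\gamma\sup_{\alpha,\beta\in\mathbb{R}_+}f_1(\alpha,\beta)=\gamma
\]
valid for every $(\alpha,\beta)\in\mathbb{R}_+^2$.

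Combining the two displays, the trace (\ref{real1}) obeys $\mathcal{T}(\alpha,\beta)\leq\gamma-(d+1)\eta_{n,k}^2<0$ for all admissible $\alpha,\beta$. Consequently, whenever $\sigma_{1,2}\in\mathbb{C}\backslash\mathbb{R}$ (that is, when (\ref{comp}) holds), their common real part $\frac{1}{2}\mathcal{T}(\alpha,\beta)$ is strictly negative, so this real part can neither vanish (the transcritical case) nor become positive (the Hopf case). The only route to instability that survives is a pair of \emph{real} eigenvalues with at least one positive member, which requires $\mathcal{D}(\alpha,\beta)<0$ and is precisely Turing type; this would complete the argument.

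The step I expect to carry the weight is the uniform trace bound, since the whole theorem hinges on the reaction part of $\mathcal{T}$ being bounded above by $\gamma$ \emph{independently} of $(\alpha,\beta)$, which is exactly the supremum $\sup f_1=1$ borrowed from Theorem~\ref{theorem2}. The remaining subtlety worth emphasising is the strictness of (\ref{condtur}): because the bound on $\rho$ is strict, the inequality $(d+1)\eta_{n,k}^2>\gamma$ is strict as well, and this is what guarantees $\mathcal{T}<0$ with no equality, thereby excluding the borderline transcritical case $\mathrm{Re}(\sigma_{1,2})=0$ and not merely the Hopf case $\mathrm{Re}(\sigma_{1,2})>0$.
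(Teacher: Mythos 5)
Your proposal is correct, and it takes a genuinely different route from the paper's own proof. The paper does not reuse the supremum estimate of Theorem \ref{theorem2}; instead it analyses the surface $\mathcal{T}(\alpha,\beta)$ by calculus: the first-derivative test shows its unique stationary point is $(0,0)$, the Hessian --- computed in polar coordinates via L'Hopital's rule --- classifies that point as a saddle ($\det H=-8\gamma^2<0$), and then $\mathcal{T}$ is restricted to the two coordinate axes, where $\mathcal{T}_0(\alpha)=-\gamma(1+\alpha^2)-(d+1)\eta_{n,k}^2$ is automatically negative, while $\mathcal{T}_0(\beta)=\gamma(1-\beta^2)-(d+1)\eta_{n,k}^2$ is negative at $\beta=0$ precisely when (\ref{condtur}) holds; monotone decrease of both restrictions then completes the paper's argument. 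You short-circuit all of this by rearranging (\ref{condtur}) into $(d+1)\eta_{n,k}^2>\gamma$ and invoking the pointwise bound $f_1-f_2\leq f_1\leq 1$ from (\ref{f1f2}), which yields $\mathcal{T}(\alpha,\beta)\leq\gamma-(d+1)\eta_{n,k}^2<0$ uniformly on $\mathbb{R}_+^2$ in two lines. The two arguments pivot on the same critical quantity --- the paper's threshold $\mathcal{T}_0(\beta)\big|_{\beta=0}=\gamma-(d+1)\eta_{n,k}^2$ coincides with your bound, since the reaction part of the trace approaches its supremum $\gamma$ exactly along the $\beta$-axis near the origin --- but your version buys a genuinely global and logically tighter estimate: the paper's combination of ``interior saddle plus negativity and monotonicity along the axes'' does not by itself exclude $\mathcal{T}>0$ at some interior point of the open quadrant (a saddle is not a maximum, and no behaviour at infinity is controlled), whereas your pointwise inequality does, and it also makes explicit that the strictness of (\ref{condtur}) is what eliminates the borderline transcritical case $\mathrm{Re}(\sigma_{1,2})=0$ in addition to the Hopf case $\mathrm{Re}(\sigma_{1,2})>0$. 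Your closing observation that, once $\mathcal{T}<0$ everywhere, any surviving instability must come from real eigenvalues and then forces $\mathcal{D}(\alpha,\beta)<0$ --- i.e.\ is of Turing type --- is also correct and ties the conclusion down cleanly.
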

\begin{proof}[Proof:]
 The strategy of this proof is through detailed analysis of the real part of the eigenvalues of the linearised system, when the eigenvalues are a complex conjugate pair. This can be done through studying the surface $\mathcal{T}(\alpha,\beta)$, and finding that it has a unique extremum point at $(0,0)$.  The method of the second derivative test and Hessian matrix is used to determine the type of this extremum. Upon finding its type, then the monotonicity of $\mathcal{T}(\alpha,\beta)$ is analysed in the neighbourhood of the extremum point in both directions $\alpha$ and $\beta$. The monotonicity analysis and the type of the extremum leads to proving the claim of the theorem.
 
The eigenvalues $\sigma_{1,2}$, in terms of trace $\mathcal{T}(\alpha,\beta)$ and determinant $\mathcal{D}(\alpha,\beta)$ are given by
%\[
 $\sigma_{1,2} = \frac{1}{2}\mathcal{T}(\alpha,\beta) \pm \frac{1}{2}\sqrt{\mathcal{T}^2(\alpha,\beta) -4\mathcal{D}(\alpha,\beta)}$,
 %\]
 where 
 \[\begin{split}
\mathcal{T}(\alpha,\beta)=& \gamma \frac{\beta - \alpha  - (\beta +\alpha)^3}{\beta +\alpha}-(d+1)\eta_{n,k}^2,\\
 \mathcal{D}(\alpha,\beta) =&\Big(\gamma \frac{\beta-\alpha}{\beta+\alpha}-\eta_{n,k}^2\Big)\Big(-\gamma(\beta +\alpha)^2-d\eta_{n,k}^2\Big)+2\gamma^2\beta(\beta+\alpha),
 \end{split}\]
 with $\eta_{n,k}^2$ as given by (\ref{eigenvalue}). It can be immediately observed that in the neighbourhood of $(u_s,v_s)$ for the system to exhibit Hopf or transcritical bifurcation the discriminant of the characteristic polynomial must satisfy the inequality
% \[
 $\mathcal{T}^2(\alpha,\beta) -4\mathcal{D}(\alpha,\beta)<0$.
% \]
 Therefore, the stability and type of the steady state $(u_s,v_s)$ in this case is determined by the sign of the real part of $\sigma_{1,2}$. The aim is to investigate $\mathcal{T}(\alpha,\beta)$ and derive from it condition (\ref{condtur}) on $\rho$ as a requirement for $\mathcal{T}(\alpha,\beta)$ to be negative for all strictly positive choices of $\gamma$, $\alpha$, $\beta$ and $d$.
 The first derivative test is used on $\mathcal{T}(\alpha,\beta)$ to find the stationary points of $\mathcal{T}(\alpha,\beta)$ on the domain $[0,\infty) \times [0,\infty)$.
 All stationary points of $\mathcal{T}(\alpha,\beta)$ must satisfy
 %\[
 $\frac{\partial \mathcal{T}}{\partial \alpha} = - \gamma\frac{2(\alpha+\beta)^3+2\beta}{(\alpha+\beta)^2} = 0$,
 %\]
 which is true if and only if 
 \begin{equation}
  (\alpha +\beta)^3+\beta = 0.
  \label{first}
 \end{equation}
 Similarly all stationary points of $\mathcal{T}(\alpha,\beta)$ must also satisfy
% \[
 $\frac{\partial \mathcal{T}}{\partial \beta} = - \gamma \frac{2(\alpha+\beta)^3-2\alpha}{(\alpha+\beta)^2} = 0$,
% \]
 which implies 
  \begin{equation}
  (\alpha +\beta)^3-\alpha = 0.
  \label{second}
 \end{equation}
 The system of nonlinear algebraic equations obtained from (\ref{first}) and (\ref{second}) has a unique solution namely $\alpha=0$ and $\beta = 0$.
 Therefore, $\mathcal{T}(\alpha,\beta)$ has a unique stationary point at the origin. The type of this stationary point is determined by the second derivative test for which the Hessian matrix $H(\mathcal{T}(\alpha,\beta))$ must be computed and evaluated at the point $(0,0)$. A similar approach to that used in \cite{paper38} is applied to analyse the type of the unique stationary point.
 
% \begin{equation}
% \begin{split}
%H(\mathcal{T}(\alpha,\beta))|_{(0,0)} =\left[\begin{array}{cc}
%   \frac{\partial ^2 \mathcal{T}}{\partial \alpha^2}  &  \frac{\partial ^2 \mathcal{T}}{\partial %\beta\partial \alpha } \\
%      \frac{\partial ^2 \mathcal{T}}{\partial \alpha \partial \beta}&\frac{\partial ^2 %\mathcal{T}}{\partial \beta^2}
%\end{array}\right]_{(0,0)} = \left[\begin{array}{cc}
%   -\gamma \frac{4\beta-2(\alpha+\beta)^3}{(\alpha+\beta)^3}  &  -\gamma %\frac{2(\alpha+\beta)^3+2(\alpha-\beta)}{(\alpha+\beta)^3} \\
%      -\gamma\frac{2(\alpha+\beta)^3+2(\alpha-\beta)}{(\alpha+\beta)^3}&-\gamma\frac{2(\alpha+\beta)%^3+4\alpha}{(\alpha+\beta)^3}
%\end{array}\right]_{(0,0)}.
%\notag
%\end{split}
%\end{equation}
It is clear that the entries of $H$ upon direct evaluation at the point $(0,0)$ are undefined. This is treated by using L'Hopital's rule.  L'Hopital's rule sometimes does not work for functions of two variables defined on cartesian coordinates, therefore a transformation of the entries to polar coordinates is applied. We will exploit this technique to express the Hessian matrix in polar coordinates and differentiate accordingly. 
%The main reason why this transformation works for this particular case is because, it in a sense combines both variables $\alpha$ and $\beta$ into a single variable $r$, which denotes the radial distance from the point $(0,0)$. Since in the infinitesimal vicinity of the origin $(0,0)$ the variation of the second variable of polar coordinates $\theta$ can be disregarded. The limit $\lim_{(\alpha,\beta)\rightarrow (0,0)}\mathcal{T}(\alpha,\beta)$ for each entry can be computed as $\lim_{r\rightarrow 0} \mathcal{T}(r,\theta)$, in which differentiation with respect to single variable is required. 
The entries of $H$ are transformed to polar coordinates using $\alpha = \hat{r}\cos(\hat{\theta})$ and $\beta = \hat{r}\sin(\hat{\theta})$, so the rule can be applied by taking the $\lim_{r\rightarrow 0} H$.
Using $(\hat{r},\hat{\theta})$ coordinates the entries of $H$ take the following form
\begin{equation}
H(\mathcal{T}(\hat{r},\hat{\theta}))|_{\hat{r} = 0}=-\gamma \left[\begin{array}{cc}
   \frac{4\hat{r}\sin\hat{\theta}-2r^3(\cos\hat{\theta}+\sin\hat{\theta})^3}{r^3(\cos\hat{\theta}+\sin\hat{\theta})^3}  &  \frac{2\hat{r}^3(\cos\hat{\theta}+\sin\hat{\theta})^3+2\hat{r}(\cos\hat{\theta}-\sin\hat{\theta})}{\hat{r}^3(\cos\hat{\theta}+\sin\hat{\theta})^3}\\
      \frac{2\hat{r}^3(\cos\hat{\theta}+\sin\hat{\theta})^3+2\hat{r}(\cos\hat{\theta}-\sin\hat{\theta})}{\hat{r}^3(\cos\hat{\theta}+\sin\hat{\theta})^3}&\frac{4\hat{r}\cos\hat{\theta}+2\hat{r}^3(\cos\hat{\theta}+\sin\hat{\theta})^3}{\hat{r}^3(\cos\hat{\theta}+\sin\hat{\theta})^3} 
\end{array}\right]_{\hat{r}=0}.
\label{hess1}
\end{equation}
L'Hopital's rule is applied to each entry of $H$ separately and the $\lim_{\hat{r} \rightarrow 0} H_{ij}(\mathcal{T}(\hat{r},\hat{\theta}))$ is computed for $i,j=1,2$. Starting with the entry $H_{11}$ and cancelling $\hat{r}$, the expression takes the form
 \[
 \begin{split}
 \lim_{\hat{r} \rightarrow 0} H_{11} =\lim_{\hat{r} \rightarrow 0}\frac{4\sin\hat{\theta}-2\hat{r}^2(\cos\hat{\theta}+\sin\hat{\theta})^3}{\hat{r}^2(\cos\hat{\theta}+\sin\hat{\theta})^3}.
 \end{split}
 \]
 Let $\mathcal{T}_1(\hat{r},\hat{\theta})$ and $\mathcal{T}_2(\hat{r},\hat{\theta})$ respectively denote the numerator and the denominator of the expression for $H_{11}$, then the application of L'Hopital's rule suggests that 
 \[\begin{split}
 \lim_{r \rightarrow 0} H_{11}(\mathcal{T}(\hat{r},\hat{\theta})) &= \lim_{\hat{r} \rightarrow 0} \frac{\mathcal{T}_1(\hat{r},\theta)}{\mathcal{T}_2(\hat{r},\theta)}= \frac{\lim_{\hat{r} \rightarrow 0} \frac{d\mathcal{T}_1}{d\hat{r}}(\hat{r},\hat{\theta})}{\lim_{\hat{r} \rightarrow 0} \frac{d\mathcal{T}_2}{d\hat{r}}(\hat{r},\hat{\theta})}= \lim_{\hat{r} \rightarrow 0}\frac{ -4\hat{r}(\cos \hat{\theta} + \sin \hat{\theta})^3}{2 \hat{r} (\cos \hat{\theta} +\sin \hat{\theta})^3}= -2.
 \end{split}\]
 Applying the same procedure for $H_{12}$, $H_{21}$ and $H_{22}$, all the entries of $H$ are computed and given by
 \begin{equation}
H(\mathcal{T}(\alpha,\beta))|_{(0,0)}=-\gamma\left[\begin{array}{cc}
   -2  &  2 \\
  2  &  2
\end{array}\right].
\label{hessnum}
\end{equation}
Since the $\text{det}(H) = -8 \gamma^2< 0$, therefore, the second derivative test suggests that $(0,0)$ is a saddle point of $\mathcal{T}(\alpha,\beta)$. Since it was previously shown that $\mathcal{T}(\alpha,\beta)$ attains a unique stationary point in the domain $[0,\infty)\times[0,\infty)$, i.e. by solving the equations (\ref{first}) and (\ref{second}), therefore, if $(0,0)$ was a maximum and $\mathcal{T}(0,0)<0$, this would suggest that, whenever $\sigma_{1,2}$ has a non-zero imaginary part then $Re(\sigma_{1,2})<0$ regardless of the choice of $d$, $\gamma$ and $\rho$, however due to fact that $(0,0)$ is a saddle point, it remains to show that $\mathcal{T}(\alpha,\beta)$ is negative at $(0,0)$ and its first derivatives in the neighbourhood of $(0,0)$ of $\mathcal{T}(\alpha,0)$ and $\mathcal{T}(0,\beta)$ for positive values of $\alpha$ and $\beta$ in both directions are negative and do not change sign. Let $\mathcal{T}_0(\alpha)$ and $\mathcal{T}_0(\beta)$ denote the curves for constants $\beta=0$ and $\alpha=0$ respectively on the surface defined by $\mathcal{T}(\alpha,\beta)$, then 
\[\begin{split}
\mathcal{T}_0(\alpha) = \lim_{\beta \rightarrow 0}\mathcal{T}(\alpha,\beta) = -\gamma(1+\alpha^2)-(d+1)\eta_{n,k}^2,\\
\mathcal{T}_0(\beta) = \lim_{\alpha \rightarrow 0}\mathcal{T}(\alpha,\beta) = \gamma(1-\beta^2)-(d+1)\eta_{n,k}^2.
\end{split}\]
The expression for $\mathcal{T}_0(\alpha)$ clearly satisfy that it is negative at $\alpha=0$ and its first derivative in the direction of $\alpha$ is
%\[
$ \frac{d\mathcal{T}_0(\alpha)}{d\alpha} = -2\gamma\alpha <0$ for all  $\gamma, \alpha \in [0,\infty)$.
%\]
The expression for $\mathcal{T}_0(\beta)$ however is not trivially negative for all values, since the sign of the constant $\gamma$ in the expression is positive, which if computed at $\beta=0$, with substituting (\ref{eigenvalue}) for $\eta_{n,k}^2$ leads to the desired condition (\ref{condtur})
\begin{equation}
\mathcal{T}_0(\beta)\big|_{\beta=0} = \gamma-(d+1)\eta_{n,k}^2<0 \implies \rho<2\sqrt{\frac{(d+1)(2k+1)(n+2k+1)(n+4k)}{\gamma(n+4k+2)}}. \notag 
%\label{eqlast}
\end{equation}
It has been shown that the condition (\ref{condtur}) is necessary for $\mathcal{T}(\alpha,\beta)$ to be negative at the unique stationary point namely $(0,0)$, it remains to show that the first derivative of $\frac{d\mathcal{T}_0}{d\beta}(\beta)<0$,
%\[
$\frac{d\mathcal{T}_0}{d\beta} = -2\gamma \beta <0$  for all $ \gamma,\beta \in [0,\infty)$
%\]
which completes the proof.
 \end{proof}
 The region where the eigenvalues are repeated real roots is defined  by the implicit curves in the parameter space satisfying (\ref{condequal}), these curves are computed numerically in Section \ref{main}. These curves form the boundary between the regions of complex and real eigenvalues. Varying the diffusion rate $d$ causes a shift in the location of the curves indicating clearly regions that are subject to diffusion-driven instability.
The remaining case to look at is when both eigenvalues are real and distinct. This happens if $\alpha$ and $\beta$ are chosen such that the strict inequality case of (\ref{condreal}) is satisfied. This case corresponds to the diffusion-driven instability \textit{Turing type} only, because both eigenvalues are real and distinct. 
\subsection{Interpretation of the dynamics for the case of real eigenvalues}
If both eigenvalues are negative real values and distinct, then the system is spatially as well as temporally stable, the dynamics will achieve no patterns, hence the system returns to the uniform constant steady state $(u_s,v_s)$ as time grows, [see Section \ref{fem} Figure \ref{stable} (a)] with no effect from diffusion. If the eigenvalues are both real with different signs, then the type of instability caused by diffusion is spatially periodic or oscillatory in space, because this case corresponds to the steady state becoming a saddle point. If both eigenvalues are positive real values and distinct, then the dynamics are expected to exhibit a spatially periodic pattern, in the form of stripes or spots. 
\section{Solution of partitioning curves and numerical verification}\label{main}
This section mainly focuses on simulating the numerical solutions of (\ref{part1}) and (\ref{part2}), furthermore, a pictorial representation of the implicit curves satisfying (\ref{part1}) and (\ref{part2}) on the admissible parameter plane $(\alpha,\beta)\in\mathbb{R}_+^2$ is obtained in light of which the full classification of the admissible parameter plane is presented. Numerical solutions of the implicit partitioning curves satisfying (\ref{part1}) and (\ref{part2}) are computed subject to both conditions respectively given by (\ref{comp4}) and (\ref{condtur}) on the radius $\rho$ of $\Omega$ in terms of parameters $d$ and $\gamma$. 
% Furthermore, the numerical verifications of Theorems \ref{theorem2} and \ref{theorem3} are carried out in this section, where the reaction-diffusion system is numerically solved using the finite element method on a unit disk-shape domain. 
The relationship between radius $\rho$ of the domain and parameters $d$ and $\gamma$ is shown to be in agreement with theoretical predictions presented.
 \subsection{Numerical solution of the partitioning curves}\label{numsol}
 Using algebraic manipulation, expanding the brackets and rearranging one can easily show that equation (\ref{part1}) can be written as a six degree implicit polynomial in the variable $\beta$, where the coefficients of such polynomial depend on all the remaining parameters namely $\alpha$, $d$, $\gamma$ and the eigenvalues $\eta^2_{n,k}$ of the diffusion operator $\triangle_p$. Let $\psi(\alpha,\beta)$ denote the six degree polynomial obtained from manipulating and rearranging (\ref{part1}), then finding the implicit solution satisfying equation (\ref{part1}) is equivalent to finding all the roots of the six degree polynimial equation $\psi(\alpha,\beta)=0$, which according to the fundamental theorem of algebra there exist at most six distinct roots \cite{paper38, paper42}. 
 
 Solutions of the implicit curves satisfying (\ref{part1}) are obtained by constructing a two-dimensional quadrilateral mesh on a rectangular domain $P=[0,\alpha_{max}]\times[0,\beta_{max}]$, where $\alpha_{max}$ and $\beta_{max}$ are the maximum positive real values in the respective directions of $\alpha$ and $\beta$ beyond which in the admissible parameter space, the uniform steady state $(u_s,v_s)$ neither changes type nor does it change stability. Domain $P$ is discretised by $N$ points in both directions of $\alpha$ and $\beta$, where $N$ is a positive integer, which leads to a rectangular mesh of $(N-1)\times(N-1)$ cells, each of size $\frac{\alpha_{max}}{N} \times \frac{\beta_{max}}{N}$, with $N^2$ points in $P$. 
To obtain the implicit solutions for (\ref{part1}), at every mesh point $\alpha_i$ in the direction the parameter-coordinate $\alpha $, the roots of the six degree polynomial in $\beta$ namely $\psi(\alpha_i,\beta)=0$, denoted by $\psi_i(\beta)=0$ are computed using the Matlab command `$roots$'. For every fixed mesh point $\alpha_i$ and fixed parameters $d$, $\gamma$ and eigenvalue $\eta_{n,k}$ one obtains $\psi_i(\beta)$, in the form
 \begin{equation}
 \psi_i(\beta)=C_0(\alpha_i)+C_1(\alpha_i)\beta+C_2(\alpha_i)\beta^2+C_3(\alpha_i)\beta^3+C_4(\alpha_i)\beta^4+C_5(\alpha_i)\beta^5+C_6(\alpha_i)\beta^6,
 \label{polyn1}
 \end{equation}
 where the expressions for the coefficients are given by
 \begin{equation*}
\begin{split}
C_0(\alpha_i) =& \alpha_i^2\gamma-2\alpha_i^2\gamma\eta_{n,k}^2+2\alpha_i^4\gamma+\alpha_i^2\eta_{n,k}^4-2\alpha_i^2d\gamma\eta_{n,k}^2-4\alpha_i^4\gamma^2+2\alpha_i^2d\eta_{n,k}^4-4\alpha_i^3\eta_{n,k}^4\\
&+2\alpha_i^4\gamma\eta_{n,k}^2+\alpha_i^6\gamma+\alpha_i^2d^2\eta_{n,k}^4+2\alpha_i^4d\eta_{n,k}^2-4\alpha_i^5\gamma\eta_{n,k}^2,
\end{split}
\end{equation*}
\begin{equation*}
\begin{split}
C_1(\alpha_i) =& 8\alpha_i^3\gamma d\eta_{n,k}^2-20\alpha_i^4\gamma\eta_{n,k}^2-12\alpha_i^2d\eta_{n,k}^4+2\alpha_i^2d\eta_{n,k}^4+6\alpha_i^5\gamma+8\alpha_i^3\gamma\eta_{n,k}^2-12\alpha_i^2\eta_{n,k}^4\\
&+4\alpha_id\eta_{n,k}^4+2\alpha_i\eta_{n,k}^4+4\alpha_i^3\gamma -2\alpha\gamma, 
\end{split}
\end{equation*}
\begin{equation*}
\begin{split}
C_2(\alpha_i)=&\gamma+2\gamma\eta_{n,k}^2+\gamma\eta_{n,k}^2+\eta_{n,k}^4+2d\gamma\eta_{n,k}^2+24\alpha_i^2\gamma^2+2d\eta_{n,k}^4-12\alpha_i\eta_{n,k}^4+12\alpha_i^2\gamma\eta_{n,k}^2\\
&+15\alpha_i^4\gamma+d^2\eta_{n,k}^4-12\alpha d\eta_{n,k}^4+12\alpha_i^2d\gamma\eta_{n,k}^2-40\alpha_i^3\gamma\eta_{n,k}^2,
\end{split}
\end{equation*}
\begin{equation*}
\begin{split}
C_3(\alpha_i) =&32\alpha_i\gamma^2-4\alpha_i\gamma-4\eta_{n,k}^4+8\alpha_i\gamma\eta_{n,k}^2+20\alpha_i^3\gamma-4d\eta_{n,k}^4+8\alpha_id\gamma\eta_{n,k}^2-40\alpha_i^2\gamma\eta_{n,k}^2,
\end{split}
\end{equation*}
\begin{equation*}
\begin{split}
C_4(\alpha_i)=&12\alpha_i^2-2\gamma+2\gamma\eta_{n,k}^2+15\gamma\alpha_i^2+2d\gamma\eta_{n,k}^2-20\alpha_i\gamma\eta_{n,k}^2,
\end{split}
\end{equation*}
\begin{equation*}
\begin{split}
C_5(\alpha_i)=6\alpha_i\gamma-4\gamma\eta_{n,k}^2, \quad \text{and} \quad C_6(\alpha_i)=&\gamma.
\end{split}
\end{equation*}
 In the expressions for the coefficients of (\ref{polyn1}), the values of $\eta_{n,k}$ are used as given by (\ref{eigenvalue}). Each of the $N$ polynomial equations in the direction of $\alpha$ has at most six roots for every fixed $\alpha_i$. Let $\beta_j$ for $j\in\{1,2,3,4,5,6\}$ denote all the six roots of $\psi_i(\beta)=0$, then the point $(\alpha_i,\beta_j)$ obtained by pairing each index $i=1,2,3,...,N$ with each root $\beta_j$ lies on the curve satisfying (\ref{part1}). A similar procedure is applied to obtain the solutions of the second partitioning curve satisfying (\ref{part2}), that corresponds to a curve in the admissible parameter space on which the real part of $\sigma_{1,2}\in\mathbb{C}\backslash\mathbb{R}$ is zero and it corresponds to the uniform steady state $(u_s,v_s)$ undergoing a transcritical bifurcation. Using a similar notation as for $\psi_i(\beta)=0$, the solutions to (\ref{part2}) are equivalent to finding all the roots of an implicit cubic polynomial in $\beta$ of the form $\phi_i(\beta)=0$, where $\phi_i(\beta)$ is given by 
 \begin{equation}
 \phi_i(\beta)=C_0(\alpha_i)+C_1(\alpha_i)\beta+C_2(\alpha_i)\beta^2+C_3(\alpha_i)\beta^3,
 \label{polyn2}
 \end{equation}
 with expressions for the coefficients of (\ref{polyn2}), taking the forms $C_0(\alpha_i)=-\alpha_i^3\gamma-\alpha_id\eta_{n,k}^2-\alpha_i\eta_{n,k}^2-\alpha_i\gamma$,
     $C_1(\alpha_i)=\gamma-\eta_{n,k}^2-d\eta_{n,k}^2-3\alpha_i^2\gamma$,
     $C_2(\alpha_i)=-3\alpha_i\gamma,$ 
     $C_3(\alpha_i)=-\gamma$ and $\eta_{n,k}$ as given by (\ref{eigenvalue}). Parameters $\alpha$ and $\beta$ resemble strictly positive real values in system (\ref{polarsystem}), therefore, upon computing all the roots of polynomials (\ref{polyn1}) and (\ref{polyn2}), the algorithm is instructed to record only roots satisfying $\beta_j \in \mathbb{R}_+$, and any roots that are either negative real or have non-zero imaginary part, are discarded. It is due to this constraint on the algorithm that gives all the admissible solutions satisfying (\ref{part1}) on the parameter space. In Figure \ref{realcomplex} the number of intersections of a vertical straight line in the direction of $\beta$ for a fixed value of $\alpha$, indicates the number of positive real roots of polynomial (\ref{polyn1}). 
 This algorithm is executed for five different values of $d$ to obtain the solutions of (\ref{part1}) and (\ref{part2}) under conditions (\ref{comp4}) and (\ref{condtur}) on the radius $\rho$. The shift and existence of the partitioning curves satisfying (\ref{part1}) and (\ref{part2}) are analysed subject to the variation of parameter $d$. It is worth noting that under the current analysis the variation of $d$ suffices to disclose every desirable insight one wishes to obtain about the variation of the parameter $\gamma$. This is due to their reciprocal locations in the expression on the right hand-sides of conditions (\ref{comp4}) and (\ref{condtur}). In other words, increasing parameter $d$ is equivalent to decreasing the parameter $\gamma$ and vice versa. Therefore, for fixed value of $\gamma$ it suffices to study the variation of $d$. Using condition (\ref{comp4}) of Theorem \ref{theorem2}, the variation of the diffusion coefficient is analysed for five different values of $d$ and Figure \ref{realcomplex} (a) shows the shift of the solutions of (\ref{part1}). It was shown that solution of (\ref{part1}) determines a partition of the region in the parameter space that corresponds to $\sigma_{1,2}$ to be a pair of complex values and it can be seen that under condition (\ref{comp4}), as the value of the parameter $d$ increases the region that corresponds to complex eigenvalues gradually decreases, however so long as condition (\ref{comp4}) remains satisfied, the vertical axis $\beta$ continues to have at least two distinct intercepts of the curves satisfying (\ref{part1}) for the same value of $d$. It is this behaviour of the real-complex eigenvalues partitioning curve that either preserves or vanishes the existence of a sub-partition for positive and negative real parts of a complex pair of roots. The five values of parameter $d$ are clearly indicated on the curves in Figure \ref{realcomplex}. It would be reasonable to use exactly the same range for the variational values of $d$ under both conditions (\ref{comp4}) and (\ref{condtur}), however, it was noted that, when the domain size is restricted by (\ref{condtur}) then the same values used for varying $d$ in Figure \ref{realcomplex} (a) invalidate inequality (\ref{condtur}), therefore, the range of variational values for parameter $d$ under condition (\ref{condtur}) is significantly smaller. Despite the fact that a significant observable shift emerges in the location of the solution curves,  using such a range does not invalidate inequality (\ref{condtur}) for $\rho$. Figure \ref{realcomplex} (b) shows the variation of parameter $d$ using the values indicated on each curve. It can be noted that as $d$ increases significantly, the number of intercepts on $\beta$ axis reduces from two to one and eventually to zero. The fact that condition (\ref{condtur}) on the domain size forbids the existence of a region in the admissible parameter space that corresponds to complex conjugate pair of $\sigma_{1,2}$ with positive real part, is related to the behaviour of the curve satisfying (\ref{polyn1}). It is the location of the curve associated to the solution of (\ref{polyn1}), that either admits or restricts the curve satisfying (\ref{polyn2}) on the admissible parameter space. This relation is in agreement and is a numerical demonstration of the conditions for diffusion-driven instability presented in \cite{paper17, paper38, book1, book7}. 
 \begin{figure}[H]
 \centering
 \small
  \begin{subfigure}[h]{.495\textwidth}
    \centering
 \includegraphics[width=\textwidth]{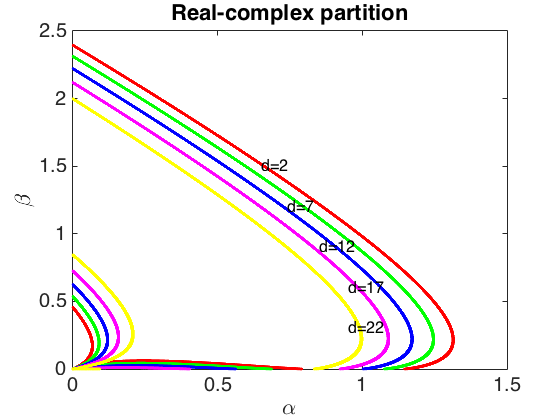}
 \caption{Partitioning curves satisfying (\ref{part1}) and\\ condition (\ref{comp4}) of Theorem \ref{theorem2}, that partitions\\ the region that corresponds to real $\sigma_{1,2}$\\ from that, which corresponds to complex\\ conjugate pair of $\sigma_{1,2}$}
 \label{realcomplexb}
 \end{subfigure}
 \begin{subfigure}[h]{.495\textwidth}
 \centering
    \includegraphics[width=\textwidth]{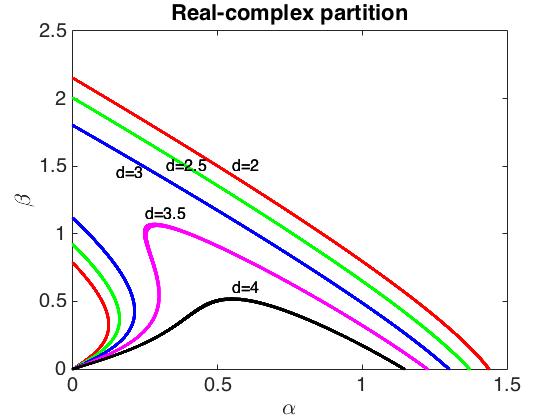}
  \caption{Partitioning curves satisfying (\ref{part1}) and\\ condition (\ref{condtur}) of Theorem \ref{theorem3}, that partitions\\ the region corresponding to real pair of $\sigma_{1,2}$\\ from that, which corresponds to complex\\ conjugate pair of $\sigma_{1,2}$}
  \label{realcomplexa}
 \end{subfigure}
 \caption{The effect of varying $d$ on the solution curves satisfying (\ref{part1}), where $\rho$ is used according to conditions (\ref{comp4}) and (\ref{condtur}) of Theorems \ref{theorem2} and \ref{theorem3} respectively.}
  \label{realcomplex}
\end{figure}
A trial and error method is used to identify which side of the partitioning curves in Figure \ref{realcomplex}, corresponds to complex $\sigma_{1,2}$, where the shift of such regions in the parameter space subject to the same respective variation of $d$ is presented in Figure \ref{complex}. 
 \begin{figure}[H]
 \centering
 \small
  \begin{subfigure}[h]{.495\textwidth}
    \centering
 \includegraphics[width=\textwidth]{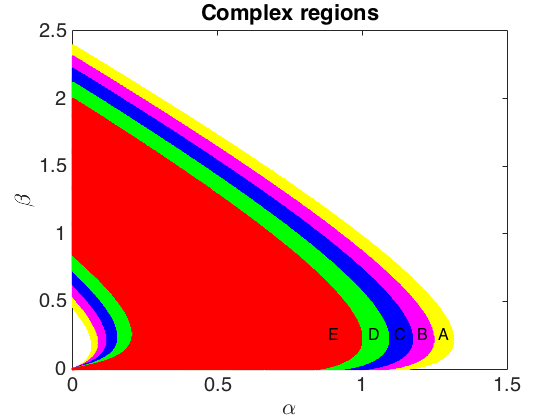}
 \caption{Shift of regions corresponding to \\complex $\sigma_{1,2}$ and subject to condition\\ (\ref{comp4}) of Theorem \ref{theorem2}}
 \label{complexb}
 \end{subfigure}
 \begin{subfigure}[h]{.495\textwidth}
 \centering
    \includegraphics[width=\textwidth]{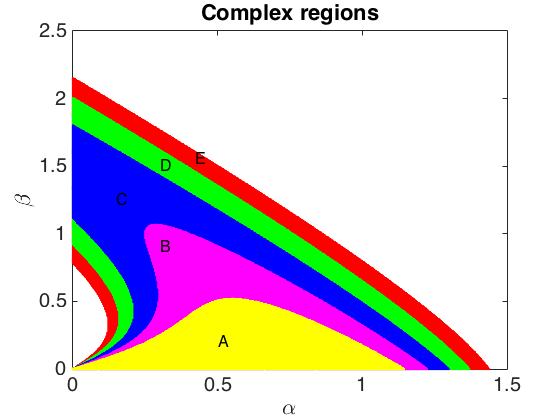}
  \caption{Shift of regions corresponding to \\complex $\sigma_{1,2}$ and subject to condition\\ (\ref{condtur}) of Theorem \ref{theorem3}}
  \label{complexa}
 \end{subfigure}
 \caption{The shift in parameter spaces corresponding to complex $\sigma_{1,2}$ as a consequence of varying $d$.}
  \label{complex}
\end{figure}
Investigating the regions corresponding to complex values for $\sigma_{1,2}$ in Figure \ref{complex}, using the solution of (\ref{polyn2}), it was found that a sub-partition only exists if the value of $\rho$ satisfies condition (\ref{comp4}), with respect to the values of $d$ and $\gamma$. This is a numerical verification of Theorem \ref{theorem2}. If the values of $d$ and/or $\gamma$ are changed such that $\rho$ no longer satisfies condition (\ref{comp4}), it causes to vanish the existence of a sub-partition, within the region corresponding to complex eigenvalues $\sigma_{1,2}$, which is in agreement with Theorem \ref{theorem3}. Under such choices of $d$ and $\gamma$, the region of the parameter space corresponding to complex $\sigma_{1,2}$ has no stability partition, therefore, everywhere on this region the real part of $\sigma_{1,2}$ is negative. If parameters $\alpha$ and $\beta$ are fed into system (\ref{polarsystem}), then the dynamics are expected to exhibit global temporal stability. Figure \ref{complexs} shows the regions on the admissible parameter spaces corresponding to complex $\sigma_{1,2}\in\mathbb{C}\backslash\mathbb{R}$ with negative real part. It can be noted that Figure \ref{complexs} (b) portrays exactly the same spaces as shown in Figure \ref{complex} (b), which is a further verification of Theorem \ref{theorem3}, namely, when $\rho$ satisfies condition (\ref{condtur}), then for no choice of $\alpha,\beta \in \mathbb{R}_+$ the complex eigenvalue $\sigma_{1,2}$ can have positive real part.
It further means that if $\rho$ is chosen such that it satisfies condition (\ref{condtur}), then system (\ref{polarsystem}) is guranteed to exhibit global temporal stability in the dynamics, with the only possibility of spatial periodic behaviour. The consequence of this relationship is to state that if the value of $\rho$ is bounded above by the right hand-side of condition (\ref{condtur}), then the diffusion-driven instability will remain restricted to Turing type only, hence any pattern obtained under condition (\ref{condtur}) on $\rho$ is expected to be spatially periodic pattern, with a global temporally stable behaviour. In this case we can only obtain a pattern of spots or stripes, with spatial periodicity. 
\begin{figure}[H]
 \centering
 \small
  \begin{subfigure}[h]{.495\textwidth}
    \centering
 \includegraphics[width=\textwidth]{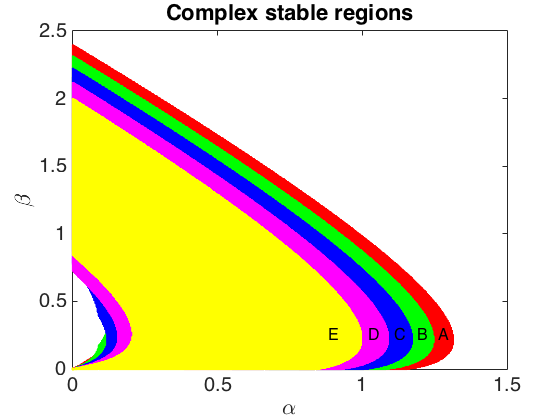}
 \caption{Shift of regions corresponding to \\complex $\sigma_{1,2}$ with negative real part\\ and subject to condition (\ref{comp4}) of \\Theorem \ref{theorem2}}
 \label{complexsb}
 \end{subfigure}
 \begin{subfigure}[h]{.495\textwidth}
 \centering
    \includegraphics[width=\textwidth]{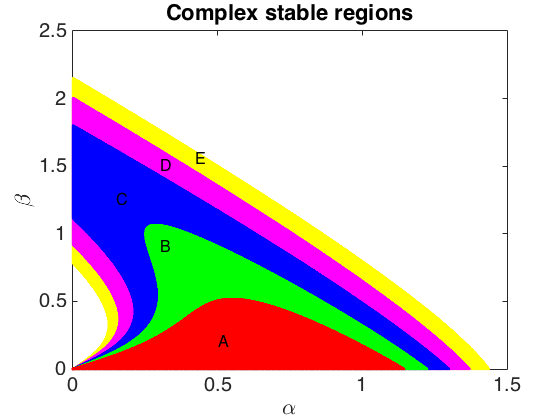}
  \caption{Shift of regions corresponding to \\complex $\sigma_{1,2}$ with negative real parts \\and subject to condition (\ref{condtur}) of \\Theorem \ref{theorem3}}
  \label{complexsa}
 \end{subfigure}
 \caption{The shift in parameter spaces corresponding to complex $\sigma_{1,2}$ with negative real parts as a consequence of varying $d$.}
  \label{complexs}
\end{figure}
If a condition on $\rho$ is set so that it is large enough to exceed the value on the right hand-side of condition (\ref{condtur}), i.e. $\rho$ satisfying (\ref{comp4}), only then a sub-partition can emerge within the admissible parameter space corresponding to complex pair of $\sigma_{1,2}$. This can be observed by comparing Figure \ref{complexs} (a) with Figure \ref{complex} (a), in particular the deficiency of regions is clearly visible on the inner stripes of the shifted spaces near the origin in Figure \ref{complexs} (a). Figure \ref{comufig} (b) shows the emergence of these curves that partition the region corresponding to complex $\sigma_{1,2}$. Recalling that if a sub-partition in the regions indicated by Figure \ref{complex} exists, then the corresponding partitioning curves must satisfy (\ref{part2}), which resemble the values of the parameter space that causes the real part of $\sigma_{1,2}$ to become zero when it is a pair of complex conjugate values. Therefore, on these curves the uniform steady state $(u_s,v_s)$ undergoes transcritical bifurcation. These are also the curves on which the real part of $\sigma_{1,2}\in \mathbb{C}\backslash \mathbb{R}$ changes sign, which means to one side of these curves a region in the admissible parameter space exists that corresponds to $\sigma_{1,2}$ to be a complex conjugate pair but with positive real part. Parameter values of such kind result in the uniform steady state $(u_s,v_s)$ to become periodic in time. This entails that the values of $\alpha$ and $\beta$ from this region causes system (\ref{polarsystem}) to exhibit Hopf bifurcation. Figure \ref{comufig} (a) shows a shift in the region of the parameter spaces that corresponds to Hopf bifurcation for the same variation in the value of $d$ as used in Figure \ref{complex}. It is also worth noting that with increasing values of $d$, the parameter spaces corresponding to Hopf bifurcation gradually decrease. This is in agreement with the mathematical reasoning behind Theorem \ref{theorem2}, because as the value of $d$ is increased, one gets closer to the violation of the necessary condition (\ref{comp4}) for the existence of regions for Hopf bifurcation. Figure \ref{comufig} (b) shows a more gradual extinction of the sub-partitioning curves within the region corresponding to complex $\sigma_{1,2}$. For eight different values of $d$, with $\rho$ still satisfying (\ref{comp4}), the solution of (\ref{polyn1}) and  (\ref{polyn2}) was plotted near the origin to closely observe the interaction and a shift of the location. Where they intersect each other, it was found that for small values of $d$, the intersection between the curves (\ref{polyn1}) and (\ref{polyn2}) occurs such that a significant part of the curve satisfying (\ref{polyn2}) forms a sub-partition within the region of the admissible parameter space that corresponds to complex eigenvalues. However, as the value of $d$ is gradually increased, so did the location of the intersection moved, reducing the sub-partition formed by the solution of (\ref{polyn2}). 
\begin{figure}[H]
 \centering
 \small
  \begin{subfigure}[h]{.495\textwidth}
    \centering
 \includegraphics[width=\textwidth]{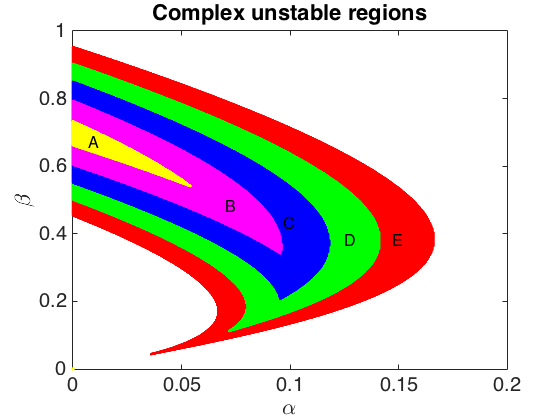}
 \caption{Shift of regions corresponding to \\complex $\sigma_{1,2}$ with positive real part and \\$\rho$ restricted to condition (\ref{comp4}) of Theorem \ref{theorem2}}
 \label{compub}
 \end{subfigure}
 \begin{subfigure}[h]{.495\textwidth}
 \centering
    \includegraphics[width=\textwidth]{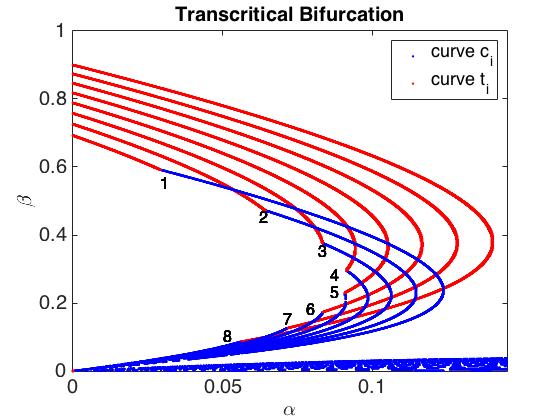}
  \caption{Shift in the location of red curves $t_i$ for eight different values of $d$, on which the values of $\sigma_{1,2}$ are pure imaginary, and $\rho$ is restricted to condition (\ref{comp4}) of Theorem \ref{theorem2}}
  \label{tranub}
 \end{subfigure}
 \caption{Regions in (a) correspond to Hopf bifurcation and in (b) curves $t_i$ correspond to transcritical bifurcation. When $\rho$ satisfies condition (\ref{condtur}), then no values of $\alpha$ and $\beta$ give rise to temporal instability in system (\ref{polarsystem}).}
  \label{comufig}
\end{figure}
Curves $c_i$ for $i\in\{1,2,3,4,5,6,7,8\}$ (blue colour) in Figure \ref{comufig} (b) denote the solution satisfying (\ref{polyn1}). On curves $c_i$ the eigenvalues $\sigma_{1,2}$ are repeated positive real roots, therefore, parameter values on these curves correspond to Turing type instability, whereas curves $t_i$ (in red colour) indicate the sub-partition formed by the solution of (\ref{polyn2}) within the region corresponding to complex eigenvalues $\sigma_{1,2}$. On curves $t_i$ the uniform steady state $(u_s,v_s)$ is expected to undergo transcritical bifurcation. In Figure \ref{comufig} (b) curves $t_i$ and $c_i$ for $i=8$ correspond to the smallest value of $d=2$, which is gradually increased reaching a maximum value of $d=22$ that corresponds to curves $c_1$ and $t_1$.

The whole of the admissible parameter space consisting of the top right quadrant of $\mathbb{R}_+^2$ plane is partitioned by the union of the spaces presented in Figure \ref{complex} and those corresponding to eigenvalues that are real. Figure \ref{realfig} shows, under both conditions (\ref{comp4}) and (\ref{condtur}) on $\rho$, the shift of parameter spaces that correspond to real eigenvalues for different values of $d$.  
 \begin{figure}[ht]
 \centering
 \small
  \begin{subfigure}[h]{.495\textwidth}
    \centering
 \includegraphics[width=\textwidth]{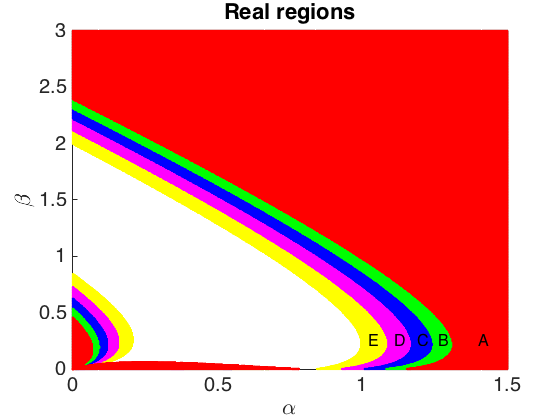}
 \caption{Shift of regions corresponding to \\real $\sigma_{1,2}$ and subject to condition\\ (\ref{comp4}) of Theorem \ref{theorem2}}
 \label{realb}
 \end{subfigure}
 \begin{subfigure}[h]{.495\textwidth}
 \centering
    \includegraphics[width=\textwidth]{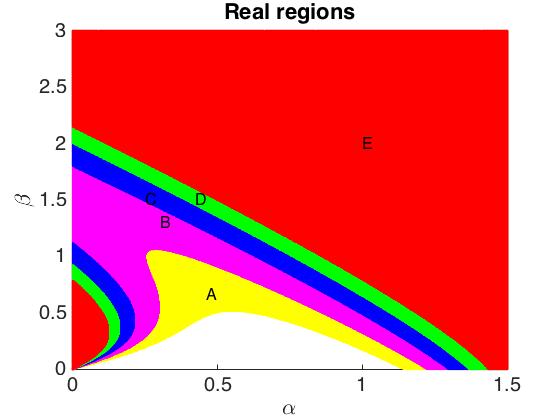}
  \caption{Shift of regions corresponding to \\real $\sigma_{1,2}$ and subject to condition\\ (\ref{condtur}) of Theorem \ref{theorem3}}
  \label{reala}
 \end{subfigure}
 \caption{The shift in parameter spaces corresponding to real $\sigma_{1,2}$ as a consequence of varying $d$.}
  \label{realfig}
\end{figure}
 The outer partitioning curves of the regions corresponding to complex eigenvalues in Figure \ref{realcomplex}, indicate the choice of $\alpha$ and $\beta$ for which the eigenvalues $\sigma_{1,2}$ is a pair of real repeated negative values, therefore, parameter spaces bounded by these curves corresponds to a pair of distinct negative real values. Parameter choice from these regions corresponds to a global spatio-temporally stable behaviour of the dynamic of system (\ref{polarsystem}). Figure \ref{realsfig} shows the shift of these spatio-temporal stable regions on the admissible parameter space. Any choice of $\alpha$ and $\beta$ from these regions will result in the dynamics of system (\ref{polarsystem}) to exhibit global stability in space as well as in time, which means that if system (\ref{polarsystem}) is perturbed in the neighbourhood of the uniform steady state $(u_s,v_s)$ using parameters from these regions, the dynamics will guarantee to return back to the uniform steady state $(u_s,v_s)$. It can be clearly observed that under both conditions (\ref{comp4}) and (\ref{condtur}) on $\rho$, regions corresponding to a pair of real negative eigenvalues form a proper subset of those spaces that corresponds to a pair of real values irrespective of sign. Figure \ref{realsfig} shows under both conditions (\ref{comp4}) and (\ref{condtur}) on $\rho$ the shift in the spaces that correspond to global spatio-temporally stable behaviour of the dynamics of system (\ref{polarsystem}). Comparing Figure \ref{realfig} and Figure \ref{realsfig} it can be clearly observed that under both conditions on $\rho$ the regions of spaces corresponding to negative real values are deformed versions of those that correspond to arbitrary real values.  
 \begin{figure}[H]
 \centering
 \small
  \begin{subfigure}[h]{.495\textwidth}
    \centering
 \includegraphics[width=\textwidth]{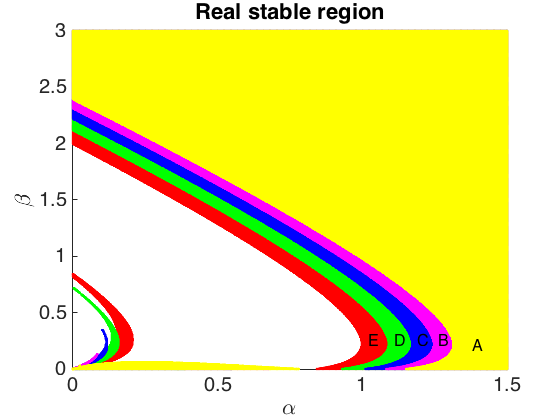}
 \caption{Shift of regions corresponding to \\a pair of negative distinct \\real $\sigma_{1,2}$ and subject to condition\\ (\ref{comp4}) of Theorem \ref{theorem2}}
 \label{realsb}
 \end{subfigure}
 \begin{subfigure}[h]{.495\textwidth}
 \centering
    \includegraphics[width=\textwidth]{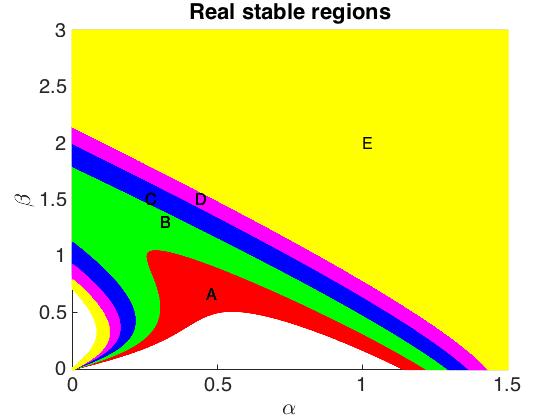}
  \caption{Shift of regions corresponding to \\a pair of negative distinct \\real $\sigma_{1,2}$ and subject to condition\\ (\ref{condtur}) of Theorem \ref{theorem3}}
  \label{realsa}
 \end{subfigure}
 \caption{The shift in parameter spaces corresponding to negative real distinct $\sigma_{1,2}$ as a consequence of varying $d$.}
  \label{realsfig}
\end{figure}
The remaining spaces to analyse are those corresponding to diffusion-driven instability of Turing type under conditions (\ref{comp4}) and (\ref{condtur}) on $\rho$, when $\sigma_{1,2}$ are a pair of real values with at least one of them positive. This region corresponds to Turing type instability and under both conditions on $\rho$ these regions exist. It can be noted that near the origin of the admissible parameter space in Figure \ref{realcomplex}, for each value of $d$ the small curves starting at the origin $(\alpha,\beta)=(0,0)$ and curving back to intercept the $\beta$ axis, are the curves on which the eigenvalues $\sigma_{1,2}$ are repeated positive real roots, therefore these curves correspond to diffusion-driven instability of Turing type.
We know that the diffusion-driven instability can also happen, when either $\sigma_1$ or $\sigma_2$ are positive real. Figure \ref{realufig} shows the shift of those regions corresponding to Turing type instability and it can be observed that as $d$ increases, the region in the parameter space enlarges. In Figures \ref{complexs} and \ref{realufig}, all the points specific to a certain colour on the parameter plane are denoted by an alphabetic letter. This is for the purpose to be able to cross reference using set notation to a specific region when summarising the results in Table \ref{table1}.
 %In each of the Figures \ref{complexs} to \ref{realufig}, the set of points on the parameter plane that correspond to a certain value of $d$ specified by certain colour, are associated with Theorems \ref{theorem2} and \ref{theorem3} as well as the associated stability and types of the uniform steady state $(u_s,v_s)$ is summarised in Table \ref{table1}.
 \begin{figure}[H]
 \centering
 \small
  \begin{subfigure}[h]{.495\textwidth}
    \centering
 \includegraphics[width=\textwidth]{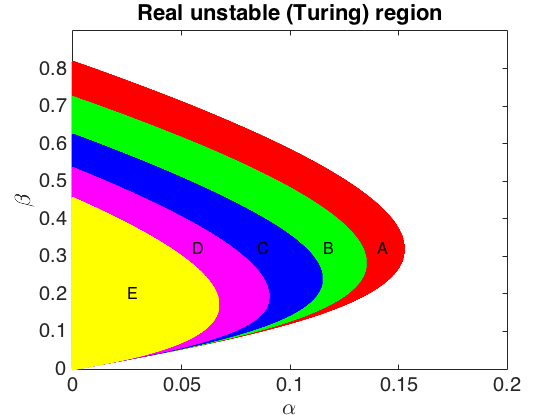}
 \caption{Shift of regions corresponding to Turing \\instability where at least one eigenvalue $\sigma_{1,2}$ \\is real positive and subject to condition \\(\ref{comp4}) of Theorem \ref{theorem2}}
 \label{realub}
 \end{subfigure}
 \begin{subfigure}[h]{.495\textwidth}
 \centering
    \includegraphics[width=\textwidth]{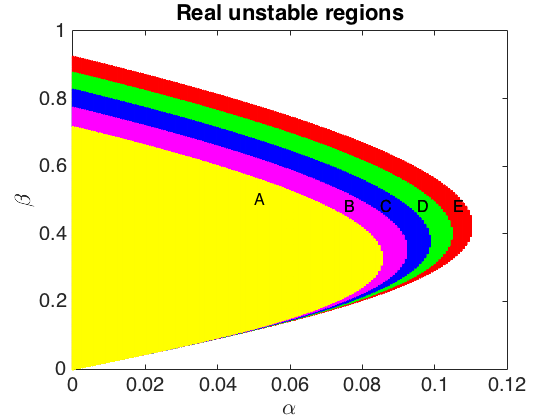}
  \caption{Shift of regions corresponding to Turing \\instability where at least one eigenvalue $\sigma_{1,2}$ \\is real positive and subject to condition \\(\ref{condtur}) of Theorem \ref{theorem3}}
  \label{realua}
 \end{subfigure}
 \caption{The shift in parameter spaces corresponding to at least one positive real eigenvalue $\sigma_{1,2}$ as a consequence of varying $d$.}
  \label{realufig}
\end{figure}
\begin{table}
\centering
\small
\tabcolsep=0.3cm
\noindent\adjustbox{max width=\textwidth}{
\begin{tabular}{|c|c|c|c|c|c|c|c|}\cline{3-8}\hline
\multicolumn{3}{|c|}{Stability of USS $(u_s,v_s)$}&\multicolumn{2}{|c|}{Stable regions}&\multicolumn{3}{|c|}{Unstable regions}\\\hline
\multicolumn{3}{|c|}{Types of USS $(u_s,v_s)$}&Node&Spiral&Turing-instability&Hopf bifurcation& Transcritical bifurcation\\\cline{1-8}
\multicolumn{3}{|c|}{Figure index}&Figure \ref{realsfig} (a)&Figure \ref{complexs} (a)&Figure \ref{realufig} (a)&Figure \ref{comufig} (a)&Figure \ref{comufig} (b)\\\hline
\multirow{5}{*}{\begin{turn}{-90}Theorem \ref{theorem2} \end{turn}}&\multirow{5}{*}{\begin{turn}{-90} $\rho$ satisfying \begin{turn}{90}(\ref{comp4})\end{turn} \end{turn} }&\diaghead{\theadfont{\normalsize} Type of (SS) }{$(d,\gamma,\rho,n)$}{$\sigma_{1,2}$}&$0>\sigma_{1,2}\in\mathbb{R}$&$\sigma_{1,2}\in\mathbb{C}\backslash\mathbb{R}, \text{Re}(\sigma)<0$&$0<\sigma_{1}\in\mathbb{R}$ or $0<\sigma_{2}\in\mathbb{R}$&$\sigma_{1,2}\in\mathbb{C}\backslash\mathbb{R}\text{, Re}(\sigma_{1,2})>0$&$\sigma_{1,2}\in\mathbb{C}\backslash\mathbb{R},\text{ Re}(\sigma_{1,2})=0$\\\cline{3-8}
&&$(2.0,1,35,1.7)$&$A$&$A \cup B \cup C \cup D \cup E $&$E$&$A \cup B \cup C \cup D \cup E$&$t_8$\\\cline{3-8}
&&$(7.0,1,35,1.7)$&$A \cup B$&$B \cup C \cup D \cup E$&$E \cup D$&$A \cup B \cup C \cup D$&$t_6$\\\cline{3-8}
&&$(12,1,35,1.7)$&$A \cup B \cup C$&$C \cup D \cup E$&$E \cup D \cup C$&$A \cup B \cup C$&$t_3$\\\cline{3-8}
&&$(17,1,35,1.7)$&$A \cup B \cup C \cup D$&$D \cup E$&$E \cup D \cup C \cup B$&$A \cup B$&$t_2$\\\cline{3-8}
&&$(22,1,35,1.7)$&$A \cup B \cup C \cup D \cup E$&$E$&$E \cup D \cup C \cup B \cup A$&$A$&$t_1$\\\hline
\multicolumn{3}{|c|}{Figure index}&Figure \ref{realsfig} (b)&Figure \ref{complexs} (b)&Figure \ref{realufig} (b)&Figure \ref{comufig} (b)&Figure \ref{complexs} (b)\\\hline
\multirow{5}{*}{\begin{turn}{-90}Theorem \ref{theorem3} \end{turn}}&\multirow{5}{*}{\begin{turn}{-90} $\rho$ satisfying \begin{turn}{90}(\ref{condtur})\end{turn} \end{turn} }&\diaghead{\theadfont{\normalsize} Type of (SS) }{$(d,\gamma,\rho,n)$}{$\sigma_{1,2}$}&$0>\sigma_{1,2}\in\mathbb{R}$&$\sigma_{1,2}\in\mathbb{C}\backslash\mathbb{R}, \text{Re}(\sigma)<0$&$0<\sigma_{1}\in\mathbb{R}$ or $0<\sigma_{2}\in\mathbb{R}$&$\sigma_{1,2}\in\mathbb{C}\backslash\mathbb{R}\text{, Re}(\sigma_{1,2})>0$&$\sigma_{1,2}\in\mathbb{C}\backslash\mathbb{R},\text{ Re}(\sigma_{1,2})=0$\\\cline{3-8}
&&$(2.0,1,10,1.7)$&$E$&$A \cup B \cup C \cup D \cup E$&$A$&$\emptyset$&$\emptyset$\\\cline{3-8}
&&$(2.5,1,10,1.7)$&$E \cup D$&$A \cup B \cup C \cup D$&$A \cup B$&$\emptyset$&$\emptyset$\\\cline{3-8}
&&$(3.0,1,10,1.7)$&$E \cup D \cup C$&$A \cup B \cup C$&$A \cup B \cup C$&$\emptyset$&$\emptyset$\\\cline{3-8}
&&$(3.5,1,10,1.7)$&$E \cup D \cup C \cup B$&$A \cup B$&$A \cup B \cup C \cup D$&$\emptyset$&$\emptyset$\\\cline{3-8}
&&$(4.0,1,10,1.7)$&$E \cup D \cup C \cup B \cup A$&$A$&$A \cup B \cup C \cup D \cup E$&$\emptyset$&$\emptyset$\\\hline
\end{tabular}}
\caption{The summary of the full classification of parameter spaces for system (\ref{polarsystem}) is presented with the associated numerical verification of the predictions made by Theorems \ref{theorem2} and \ref{theorem3}. Furthermore, it is shown how a certain type of bifurcation space shifts according to the variation of parameter $d$.}
\label{table1}
\end{table}
\subsubsection{\bf Remark}\label{remark2}
{\it The first non-trivial mode $k=1$ corresponding to the pair $(j=1,j=2)$ in the infinite series of the eigenfunctions is used for all the computations of the partitioning curves and the corresponding parameter spaces. It is worth noting that higher modes such as $k=2,3,...$ for the expressions of the eigenvalues $\eta_{n,k}$ will only cause a scaled shift in the location of the partitioning curves, whilst keeping the validity of conditions (\ref{comp4}) and (\ref{condtur}) intact. This is because the choice of higher modes $k$ enforces a corresponding change in the value of the radius $\rho$ in order for inequalities corresponding to conditions (\ref{comp4}) and (\ref{condtur}) to hold. In other words, given that conditions (\ref{comp4}) and (\ref{condtur}) are satisfied the dynamics are guaranteed to exhibit the proposed behaviour independent of the choice of the mode $k$ and the order of the corresponding Bessel's equation $n\in\frac{1}{2}\mathbb{Z}$.}

\section{Finite element solutions of RDS}\label{fem}
To verify numerically the classification of parameter spaces proposed by Theorems \ref{theorem2} and \ref{theorem3}, the reaction-diffusion system (\ref{polarsystem}) is simulated using the finite element method \cite{paper6, paper12, paper15, paper26, paper27, paper41, book2, book4, book5} on a unit disk $\Omega$. Due to the curved boundary of $\Omega$, the triangulation is obtained through an application of an iterative algorithm using a technique called \textit{distmesh} \cite{paper40, paper41}. 
\subsection{Mesh generation using the \textit{distmesh} triangulation algorithm}
The algorithm for \textit{distmesh} was originally developed by Persson and Strang \cite{paper40, paper41}. The algorithm is coded in MATLAB for generating uniform and non-uniform refined meshes on two and three dimensional geometries. The algorithm of \textit{distmesh} uses signed-distance function $d(x,y)$, which is negative inside the discretised domain $\Omega$ and is positive outside the $\partial\Omega$. The construction of distmesh triangulation is an iterative process using a set of two interactive algorithms, one of which controls the displacement of nodes within the domain and the other ensures that the consequences of node displacement does not violate the properties of the Delaunay triangulation \cite{paper43}. For details on how to generate meshes using \textit{distmesh} we refer the interested reader to the paper by Persson and Strang \cite{paper40, paper41, paper44}. 
\subsection{Finite element simulations}
Using the $distmesh$ algorithm, a unit disk $\Omega$ was discretised to obtain a uniform triangulation. Each simulation in this section is performed on a unit disk centred at the origin of cartesian plane, which is discretised by $6327$ triangles consisting of $3257$ nodes. The initial conditions for each simulation are taken to be small random perturbations in the neighbourhood of the steady state $(u_s,v_s)$ of the form \cite{book1, book7, paper38, paper17}
\begin{equation}
\begin{cases}
u_0(x,y)=\alpha+\beta+0.0016\cos(2\pi(x+y))+0.01\sum_{i=1}^8\cos(i\pi x),\\
v_0(x,y)= \frac{\beta}{(\alpha+\beta)^2}+0.0016\cos(2\pi(x+y))+0.01\sum_{i=1}^8\cos(i\pi x).
\end{cases}
\label{initial}
\end{equation} 
The choices of parameters $\alpha$ and $\beta$ are verified from each of the four regions where the dynamics of system (\ref{polarsystem}) exhibit diffusion-driven instability. In all our simulations, the parameters $d$ and $\gamma$ are varied, whilst keeping the value of radius $\rho=1$ fixed, which  allows us to keep constant the well refined number of degrees of freedom for the mesh. It proves computationally expensive to vary $\rho$, therefore in order to satisfy conditions (\ref{comp4}) and (\ref{condtur}), it suffices to change the values of $d$ and $\gamma$ to ensure that a particular condition on the domain size in terms of reaction-diffusion rates is satisfied. The actual numerical values for each simulation are presented in Table \ref{Table2}.
\subsubsection{Global spatio-temporal stability}
Before demonstrating diffusion-driven instability, a pair of parameter values $\alpha,\beta$ from the spatio-temporally stable region indicated in Table \ref{table1} is used to demonstrate, in Figure \ref{stable} (a), how the dynamics overcome small perturbations and reach the uniform steady state $(u_s,v_s)$ without evolving to any pattern. The uniform convergence to the constant steady state $(u_s,v_s)$ is shown in Figure \ref{stable} (b), through the convergence of the discrete $L_2$ norm of the discrete time derivatives of the solutions $u$ and $v$.
\begin{figure}[ht]
 \centering
 \small
  \begin{subfigure}[h]{.495\textwidth}
    \centering
 \includegraphics[width=\textwidth]{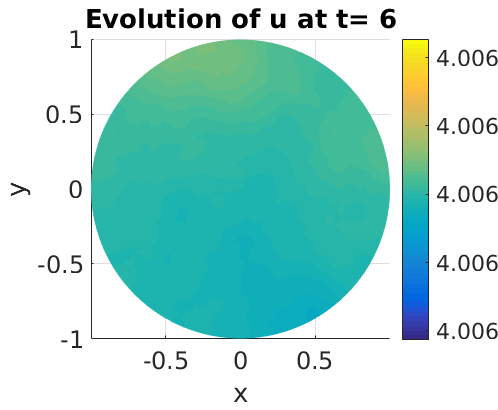}
 \caption{No pattern is evolved when parameters\\ $\alpha$ and $\beta$ are chosen outside Turing space\\ under condition (\ref{condtur}) on the radius $\rho$.}
 \label{evolu1}
 \end{subfigure}
 \begin{subfigure}[h]{.495\textwidth}
 \centering
    \includegraphics[width=\textwidth]{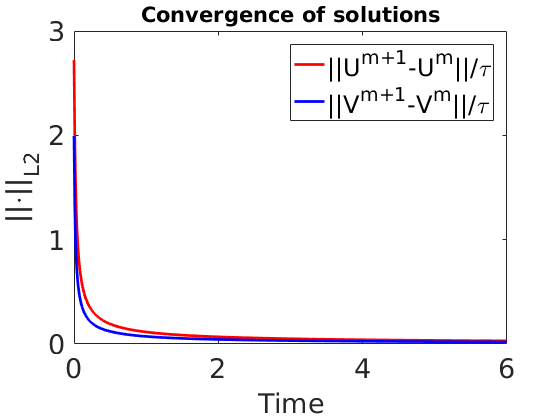}
  \caption{Uniform convergence of the discrete $L_2$ norm of the discrete time derivatives of the solutions $u$ and $v$.}
  \label{error1}
 \end{subfigure}
 \caption{When $\rho$ is bounded by a combination of $d$ and $\gamma$ (as shown in Table \ref{Table2}) according to condition (\ref{condtur}), then no choice of $(\alpha,\beta)$ outside Turing-space can trigger instability in the dynamics, hence no pattern emerges.}
  \label{stable}
\end{figure}
\subsubsection{Turing instability when the radius is small}
  Figure \ref{unstable1} (a) shows the evolution of a spatial pattern as a consequence of choosing $(\alpha,\beta)$ from the Turing region under condition (\ref{condtur}) indicated in Figure \ref{realufig} (b). 
  %The only unstable region in the admissible parameter space $(\alpha,\beta)\in\mathbb{R}_+^2$ when $\rho$ satisfies condition (\ref{condtur}) is the region presented in Figure \ref{realufig} (b), which causes the dynamics of system (\ref{polarsystem}) to evolve spatially periodic pattern as presented in Figure \ref{unstable1}. 
  Depending on the initial conditions and the mode of the eigenfunctions, the spatially periodic pattern provided by parameter spaces in Figure \ref{realufig} (b) is expected to be a combination of radial and angular stripes or spots. Once the initial pattern is formed by the evolution of the dynamics, then the system is expected to uniformly converge to a Turing-type steady state, which means the initially evolved spatial pattern becomes temporally invariant as time grows. The simulation of Figure \ref{unstable1} was executed for long enough time such that the discrete time derivative of solutions $u$ and $v$ decaying to a threshold of $5\times 10^{-4}$ in the discrete $L_2$ norm. Figure \ref{unstable1} (b) demonstrates the behaviour of the discrete time derivatives of both species for the entire period of simulation time until the threshold was reached. It is observed that after the initial Turing-type instability, the evolution of the system uniformly converges to a spatially patterned steady state   
\begin{figure}[ht] 
 \centering
 \small
  \begin{subfigure}[h]{.495\textwidth}
    \centering
 \includegraphics[width=\textwidth]{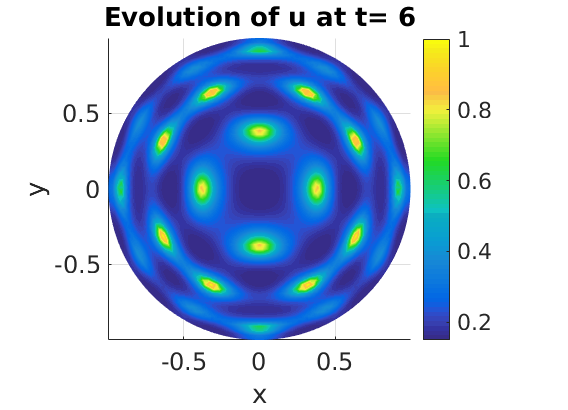}
 \caption{Pattern evolution is restricted to\\ spatial periodicity for $\alpha$ and $\beta$ in the Turing \\space under condition (\ref{condtur}) on the radius $\rho$.}
 \label{evolu2}
 \end{subfigure}
 \begin{subfigure}[h]{.495\textwidth}
 \centering
    \includegraphics[width=\textwidth]{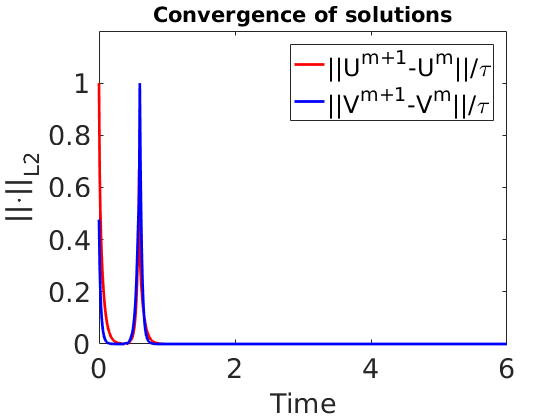}
  \caption{Instability of the discrete $L_2$ norm of the discrete time derivatives of the solutions $u$ and $v$.}
  \label{error2}
 \end{subfigure}
 \caption{When $\rho$ is bounded by a combination of $d$ and $\gamma$ (as shown in Table \ref{Table2}) according to condition (\ref{condtur}), then the only admissible pattern is a spatially periodic pattern for $(\alpha,\beta)$ from the Turing-space shown in Figure \ref{realufig} (b).}
  \label{unstable1}
\end{figure}
\subsubsection{Turing instability when the radius is large} 
Figure \ref{unstable2} presents a series of three snapshots to show how the spatially periodic pattern is evolved to a Turing type steady state, when parameters $\alpha$ and $\beta$ are chosen from Turing region and $\rho$ satisfying condition (\ref{comp4}), with respect to $d$ and $\gamma$. For simulations in Figure \ref{unstable2}, parameters $\alpha$ and $\beta$ are chosen from regions presented in Figure \ref{realufig} (a); the dynamics within these regions evolve to a spatial pattern with global temporal stability.  It can be noted from Figure \ref{unstable2} (d), that after the initial pattern is formed, the system is uniformly converging to the spatially periodic steady state. Turing instability is a domain independent phenomena, and only depends on the choice of parameters and the initial conditions. At $t=6$ the required threshold of $5\times 10^{-4}$ on the discrete $L_2$ norm of the discrete time derivatives of the solutions $u$ and $v$ is reached, which can be observed in Figure \ref{unstable2} (d).
The remaining two unstable regions in the admissible parameter space presented in Figure \ref{comufig} correspond to spatio-temporal periodicity.
\begin{figure}[H] 
 \centering
 \small
  \begin{subfigure}[h]{.495\textwidth}
    \centering
 \includegraphics[width=\textwidth]{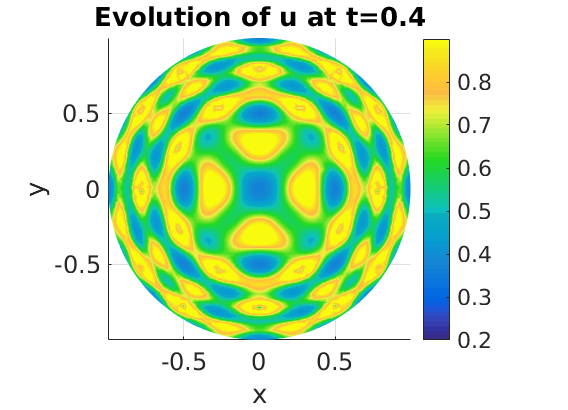}
 \caption{Initial stage of evolving spatially periodic \\ pattern when, $\alpha$ and $\beta$ are chosen from \\Turing space under condition (\ref{comp4}) on \\the radius $\rho$.}
 \label{evolu3a}
 \end{subfigure}
 \begin{subfigure}[h]{.495\textwidth}
 \centering
    \includegraphics[width=\textwidth]{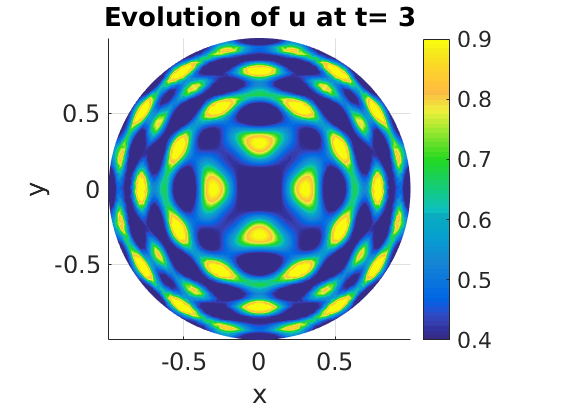}
  \caption{Spatially periodic pattern at $t=3$ is as expected converging to the Turing type steady state without allowing the initial pattern to be deformed}
  \label{evolv3b}
 \end{subfigure}
 \begin{subfigure}[h]{.495\textwidth}
    \centering
 \includegraphics[width=\textwidth]{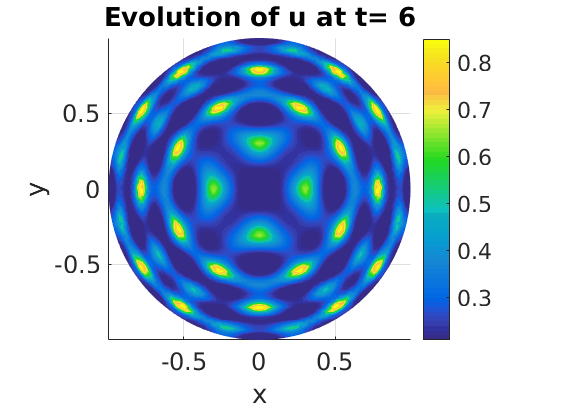}
 \caption{Turing type steady state is reached at \\$t=6$, with a threshold in the discrete \\ $L_2$ norm of $5\times10^{-4}$ satisfied for the\\ discrete time derivatives of the solutions.}
 \label{evolu3c}
 \end{subfigure}
 \begin{subfigure}[h]{.495\textwidth}
 \centering
    \includegraphics[width=\textwidth]{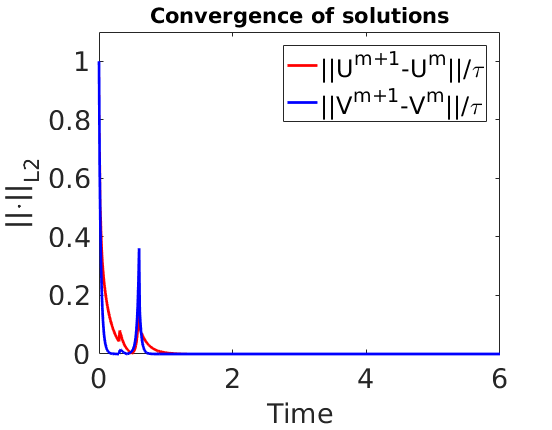}
  \caption{Instability and convergence is shown through the behaviour of the discrete time derivatives of the solutions $u$ and $v$.}
  \label{error3}
 \end{subfigure}
 \caption{When $\rho$ is large with respect to the combination of $d$ and $\gamma$ (as shown in Table \ref{Table2}) according to condition (\ref{comp4}), then the dynamics admit spatial diffusion-driven instability for $(\alpha,\beta)$ from the Turing-space.}
  \label{unstable2}
\end{figure}
\subsubsection{Hopf bifurcation}
 Regions presented in Figure \ref{comufig} (a) are those corresponding to a complex conjugate pair of eigenvalues with non-zero positive real part. These regions emerge in the admissible parameter space under condition (\ref{comp4}) on $\rho$. Choosing parameters from regions in Figure \ref{comufig} (a) admits temporal periodic behaviour in the dynamics of system (\ref{polarsystem}) as shown in Figure \ref{hopf}. The initial pattern in Figure \ref{hopf} (a) is in fact achieved earlier than at $t=1$, therefore, the temporal gap between the initial and second pattern in Figure \ref{hopf} (b) is relatively smaller than the temporal gaps that exist between the second, third and fourth temporal periods. It is worth noting that the temporal period between the successive transitional temporal instabilities from one type of spatial pattern to another grows larger with time. The initial pattern is obtained at around $t\approx1$, which becomes unstable during the transition to the second temporal period at $t\approx5$ in Figure \ref{hopf} (b). At $t\approx8$ the system undergoes a third period of instability and reaches a different spatial pattern at $t\approx12$ shown in Figure \ref{hopf} (c).
 The fourth period of temporal instability is reached at $t\approx 20$, which converges to the fourth temporally-local but spatially periodic steady state at $t\approx28$ presented in Figure \ref{hopf} (d). It follows that when parameters are chosen from the Hopf bifurcation region then the temporal gaps in the dynamics of system (\ref{polarsystem}) between successive transitional instabilities from one spatial pattern to another is approximately doubled as time grows. It is speculated that the temporal period-doubling behaviour is connected to the analogy of unstable spiral behaviour in the theory of ordinary differential equations \cite{book12}. If the eigenvalues of a dynamical system modelled by a set of ordinary differential equations is a complex number with positive real part, then the cycles of the corresponding unstable spiral grow larger as time grows. The long-term evolution of temporal instability depends on the magnitude of the real part of $\sigma_{1,2}$. If parameters $(\alpha,\beta)$ are chosen such that the trace of the stability matrix (\ref{vect2}) is large and yet the discriminant is negative, then the dynamics exhibit long-term temporal periodicity, which means that the frequency of temporal cycles will become smaller. A decaying frequency in temporal cycles means that locally on the time axis, the dynamics may exhibit similar behaviour to that of a temporally stable system, therefore, to observe temporal transition from one spatial pattern to another, we solve the model system for a long time. Figure \ref{hopf} (e) shows a visualisation of the transition of such temporal periodicity with a decaying frequency.\\
%\begin{wrapfigure}{rtbh}{0.55\textwidth}
%  \begin{center}
%    \includegraphics[width=0.55\textwidth]{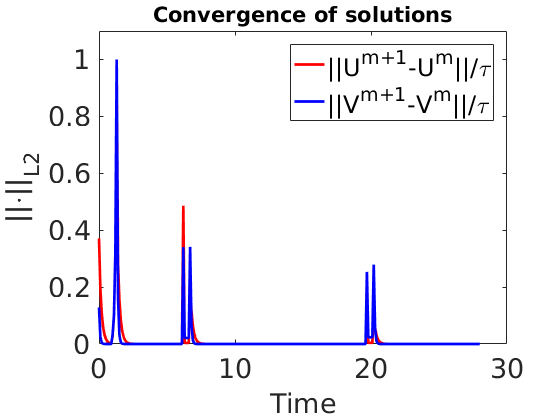}
%  \end{center}
%  \caption{Spatio-temporal periodicity in the dynamics measured in dicrete $L_2$ norm of the successive time-step difference of the solutions $u$ and $v$}
%  \label{error5}
%\end{wrapfigure}
%\begin{wrapfigure}{rtbh}{0.55\textwidth}
%  \begin{center}
 %   \includegraphics[width=0.55\textwidth]{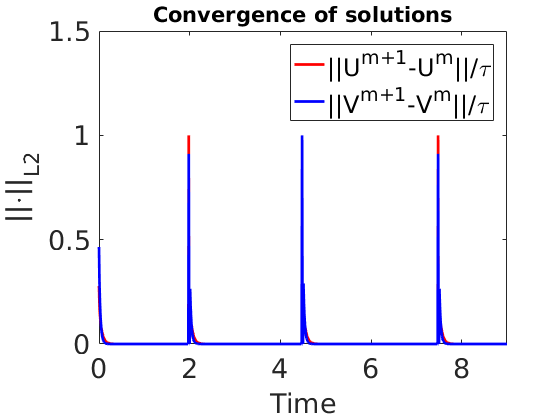}
%  \end{center}
%  \caption{Plot of the discrete $L_2$ norm of the  time-derivative of $u$ and $v$ showing the spatio-temporal behaviour of the %solutions for successive time-steps.}
%  \label{error4}
%\end{wrapfigure}
\begin{figure}[H] 
 \centering
 \small
  \begin{subfigure}[h]{.34\textwidth}
    \centering
 \includegraphics[width=\textwidth]{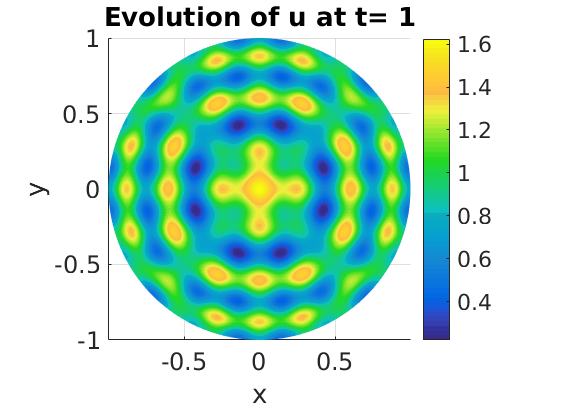}
 \caption{First temporal period\\ evolving the initial spatially\\ periodic pattern at $t=1$\\}
 \label{hopf1}
 \end{subfigure}
 \begin{subfigure}[h]{.32\textwidth}
 \centering
    \includegraphics[width=\textwidth]{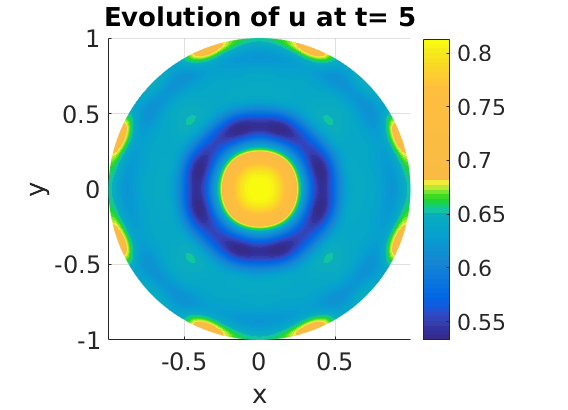}
  \caption{Spatial pattern evolved \\after the first transitional \\instability and during the\\ second temporal period at \\$t=5$}
  \label{hopf2}
 \end{subfigure}
 \begin{subfigure}[h]{.32\textwidth}
    \centering
 \includegraphics[width=\textwidth]{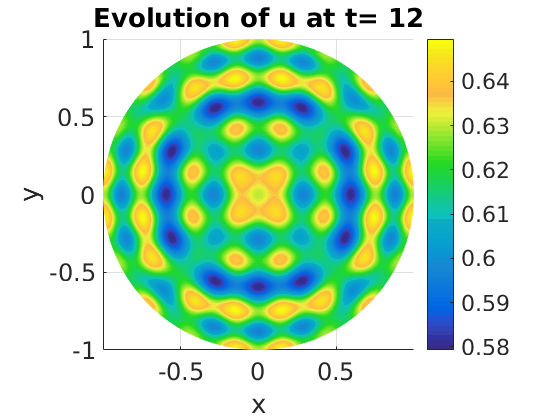}
 \caption{Spatial pattern evolved after  the second transition of temporal instability and during the third temporal period at $t=12$}
 \label{hopf3}
 \end{subfigure}
 \begin{subfigure}[h]{.495\textwidth}
 \centering
    \includegraphics[width=\textwidth]{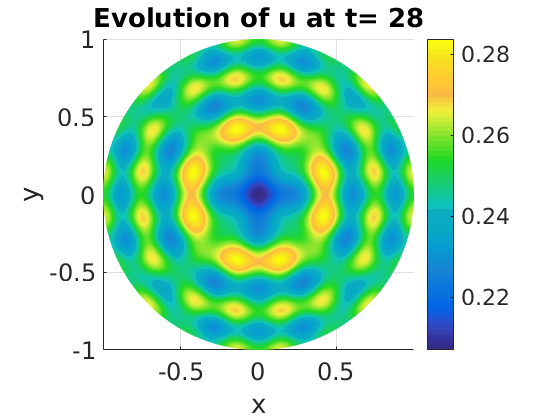}
  \caption{Spatial pattern after the third transition of \\ temporal instability and during the fourth \\temporal period obtained at $t=28$}
  \label{hopf4}
 \end{subfigure}
 \begin{subfigure}[h]{.495\textwidth}
 \centering
    \includegraphics[width=\textwidth]{Error5.png}
  \caption{Spatio-temporal periodicity in the dynamics measured in dicrete $L_2$ norm of the successive time-step difference of the solutions $u$ and $v$}
  \label{hopf5}
 \end{subfigure}
 \caption{When $\rho$ is large with respect to the combination of $d$ and $\gamma$ (as shown in Table \ref{Table2}) according to condition (\ref{comp4}), then the dynamics admit spatio-temporal diffusion-driven instability for $(\alpha,\beta)$ from Hopf bifurcation region presented in Figure \ref{comufig} (a).}
  \label{hopf}
\end{figure}
\subsubsection{Transcritical bifurcation}
As stated in Theorem \ref{theorem2}, when $\rho$ satisfies condition (\ref{comp4}) with respect to $d$ and $\gamma$, given that the parameters $(\alpha,\beta)$ chosen from the curves $t_i$ for $i=1,...,8$ as indicated in Figure \ref{comufig} (b), then one may expect the dynamics of system (\ref{polarsystem}) to exhibit spatio-temporal periodic pattern, through a transcritical bifurcation. This kind of behaviour in the dynamics is also known as the limit cycles \cite{book12}. Figure \ref{unstable3}  shows this spatio-temporal periodic behaviour in the evolution of the numerical solution of system (\ref{polarsystem}). This is the case corresponding to parameters that ensure the eigenvalues to be purely imaginary, therefore, it can be observed that the temporal instability occurs with approximately constant periods along the time axis, which verifies the theoretical prediction of the transcritical bifurcation. 
 It is also observed that during the transitional instability from spots in Figure \ref{unstable3} (a) to the angular stripes in Figure \ref{unstable3} (b) the peak of the discrete $L_2$ norm of the discrete time-derivative of the activator $u$ is bigger than that of the inhibitor $v$. However, when the transitional temporal instability occurs to turn the angular stripes in Figure \ref{unstable3} (b) into spots in Figure \ref{unstable3} (c), then the discrete $L_2$ norm of the time-derivative of the inhibitor $v$ exceeds in magnitude than that of the activator $u$. This alternating behaviour can be clearly observed in Figure \ref{unstable3} (e), where in the annotated legend $U$ and $V$ denote the discrete solutions of the activator $u$ and that of the inhibitor $v$.  It can further be understood from Figure \ref{unstable3} (e), that if $(\alpha,\beta)$ are chosen from the curves of the transcritical bifurcation given in Figure \ref{comufig} (b), then the frequency of temporal periods is predicted to remain constant for all times, resulting in a constant interchanging behaviour between different spatial patterns. 
\begin{figure}[H] 
 \centering
 \small
  \begin{subfigure}[h]{.32\textwidth}
    \centering
 \includegraphics[width=\textwidth]{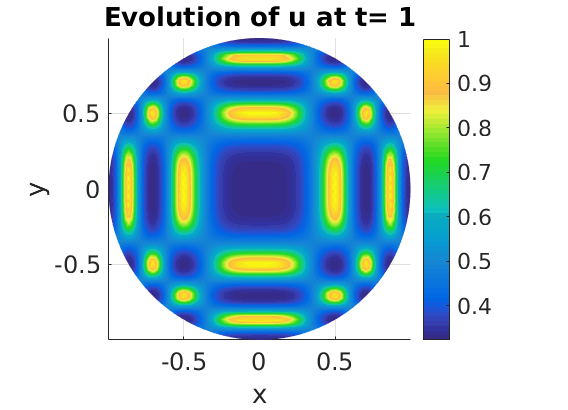}
 \caption{Initial spatially periodic\\ pattern (spots)  obtained at\\ $t=1$ when, $\alpha$ and $\beta$ are\\ chosen from the region of\\ the Hopf bifurcation under \\condition (\ref{comp4}) on the radius\\ $\rho$.}
 \label{evolu4a}
 \end{subfigure}
 \begin{subfigure}[h]{.32\textwidth}
 \centering
    \includegraphics[width=\textwidth]{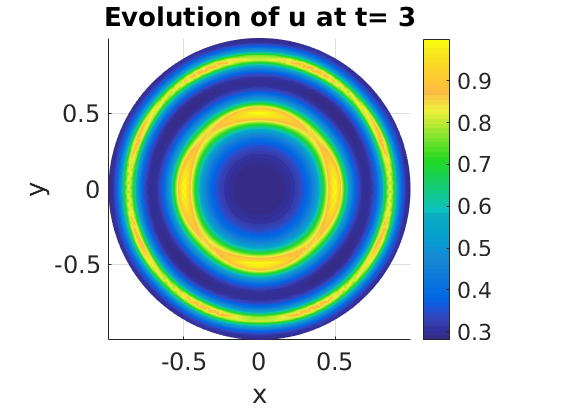}
  \caption{Spatially periodic pattern \\evolves to become a different\\pattern (angular stripes) at\\ $t=3$ for the same choice of\\ parameters as in Figure (a).\\\\}
  \label{evolv4b}
 \end{subfigure}
 \begin{subfigure}[h]{.32\textwidth}
    \centering
 \includegraphics[width=\textwidth]{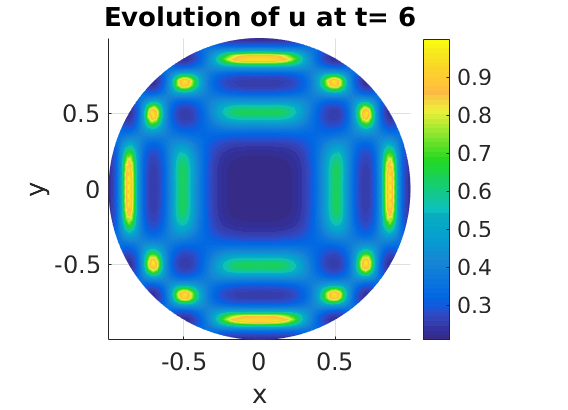}
 \caption{At $t=6$ the angular stripes undergo another period of temporal instability and become spotty as the initial pattern.\\\\}
 \label{evolu4c}
 \end{subfigure}
 \begin{subfigure}[h]{.495\textwidth}
 \centering
    \includegraphics[width=\textwidth]{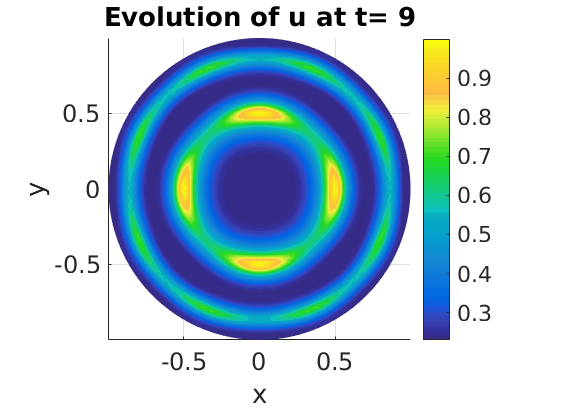}
  \caption{At $t=9$ the pattern of the second \\temporal period emerges again, indicating \\that temporally the dynamics behave in an \\alternating way between the spots and angular\\ stripes.}
  \label{evol4d}
 \end{subfigure}
 \begin{subfigure}[h]{.495\textwidth}
 \centering
    \includegraphics[width=\textwidth]{Error4.png}
  \caption{Plot of the discrete $L_2$ norm of the  time-derivative of $u$ and $v$ showing the spatio-temporal behaviour of the solutions for successive time-steps.}
  \label{error4}
 \end{subfigure}
 \caption{When $\rho$ is large with respect to the combination of $d$ and $\gamma$ (as shown in Table \ref{Table2}) according to condition (\ref{comp4}), then the dynamics of (\ref{polarsystem}) can also admit spatio-temporal diffusion-driven instability for $(\alpha,\beta)$ from the transcritical birfucation curves indicated in Figure \ref{comufig} (b).}
  \label{unstable3}
\end{figure}
\begin{table}
\centering
\small
\tabcolsep=0.3cm
\noindent\adjustbox{max width=\textwidth}{
\begin{tabular}{|c |c |c |c |c| c|}
\cline{1-1}
\hline
 Plot index& Figure \ref{stable}  & Figure \ref{unstable1} & Figure \ref{unstable2} &  Figure \ref{hopf} & Figure \ref{unstable3}\\
\hline
\diaghead{\theadfont{\normalsize} Type of (SSSS) }{Parameters}{Instability}&\shortstack{No instability\\ No pattern}& \shortstack{Turing type instability\\Spatial pattern}& \shortstack{Turing type instability\\Spatial pattern}&\shortstack{Hopf bifurcation\\Spatial and temporal pattern} &\shortstack{Transcritical bifurcation\\Spatial and temporal pattern} \\
\hline
$(\alpha,\beta)$ & $(2,2)$ & $(0.1, 0.5)$ & $(0.13, 0.3)$ & $(0.15, 0.4)$ & $(0.05, 0.7)$ \\
\hline
$(n,d, \gamma)$ & $ (2.7,10, 210)$ &$(2.7,10, 210)$& $ (1.7,6, 450)$ & $ (1.7,6,480)$ & $(1.7,6,500)$\\
\hline
Condition on $\Omega$ & (\ref{condtur}) & (\ref{condtur}) &  (\ref{comp4}) &(\ref{comp4}) & (\ref{comp4})\\
\hline
Simulation time & $6$ & $6$ &  $6$ & $28$ & $9$\\
\hline
CPU time (sec) & $784.24$ & $784.56$ &  $784.91$ & $4151.48$ & $1197.66$\\
\hline
\end{tabular}}
\caption{Showing the choice of parameters $(\alpha,\beta)$ for each simulation and the choice of $(n,d,\gamma)$ subject to the relevant condition referred to in third row. Each simulation was run with time-step of $1\times10^{-3}$.} 
\label{Table2}
\end{table}
\newpage
\section{Conclusion}\label{conclusion}
Linear stability theory was applied to a reaction-diffusion system with activator \textit{activator-depleted} reaction kinetics on a two dimensional disk-shape domain. An analytical method was applied to derive explicit expressions for eigenfunctions and the corresponding eigenvalues of the diffusion operator in polar coordinates satisfying homogeneous Neumann boundary conditions. Non-periodic chebyshev grid and periodic Fourier grid was used to discretise a unit disk on which the analytical solutions of the eigenvalue problem were simulated. A colour encoded scheme (HSV) was applied to present the phase plots of the complex valued eigenfunctions. The solution of the eigenvalue problem was used to linearise the reaction-diffusion system for linear stability analysis. An exclusive numerical method, using polynomials was applied to find the solutions of the partitioning curves on the admissible parameter spaces. Analytical methods were used to derive conditions (\ref{comp4}) and (\ref{condtur}) on the radius of a disk-shape domain in the context of bifurcation analysis. It was found that Turing instability occurs independent of the size of the radius, whereas temporal bifurcation in the dynamics are domain dependent, in particular under condition (\ref{comp4}). The relationship between reaction-diffusion rates and the radius of a disk-shape domain was established and analytically proven in Theorems \ref{theorem2} and \ref{theorem3}. The full parameter space was classified with respect to the stability and types of the uniform steady state. Parameter values from all of the regions were tested and the predicted bifurcation in the dynamics was verified using the finite element method. Due to the curved boundary of the domain, \textit{distmesh} was used to obtain the triangulation for simulating the numerical solutions of the system. Spatio-temporal periodicity of Hopf and transcritical types were shown and the corresponding plots of the discrete $L_2$ norms of the time-derivative of the solutions were obtained to demonstrate the temporal periods of limit cycles and Hopf bifurcation. It was further verified that under certain conditions on the radius of a disk with respect to the reaction-diffusion rates, the instability in the dynamics is restricted to Turing-type only forbidding the existence of a region for temporal bifurcation. A distinction between the transcritical and Hopf bifurcations was numerically established by analysing the temporal periods between transitional instabilities in the dynamics from one spatial pattern to another. The methodology used in the current paper sets an exclusive framework for a strategy to investigate general reaction-diffusion systems on arbitrary geometries. The current work brings together two important and routinely used approaches namely linear stability theory and the numerical computation to obtain robust and complete insight on the parameter spaces and the influence and role of domain size on the bifurcation theory.  
\subsection{Ideas for future work}
 We are currently extending this theoretical framework to study bifurcation analysis for reaction-diffusion systems with cross-diffusion \cite{paper7, paper20}. We want to study or investigate whether the existence of cross-diffusion has any influence on the conditions derived for the domain size in the context of bifurcation analysis with independent diffusion rates. Using the methodology of this work, arbitrary domain geometries may be investigated to obtain a full insight on the parameter spaces and possible relationship between reaction-diffusion rates with the domain size in the context of linear stability theory. 
 
 The strategy used in this paper can also be applied to study bulk-surface reaction-diffusion systems on three dimensional domains \cite{paper3, paper4, paper17, paper19}. It can be investigated to find how the surface area and the volume of three dimensional domains influence the spatio-temporal behaviour of a bulk-surface reaction-diffusion system. 
 
 Furthermore, our theoretical and conceptual framework allows us to investigate how continuous domain and surface growth and evolution influences the bifurcation behaviour of the system \cite{paper2, paper6, paper9, paper10, paper12, thesis1}. The strategy of the current work motivates us to investigate whether conditions (\ref{comp4}) and (\ref{condtur}) continue to influence the dynamics in the presence of  continuous domain and surface growth and evolution, or whether beyond a certain threshold of the domain or surface size, these conditions can be invalidated. Here, analytical theory on non-autonomous partial differential equations must be exploited appropriately.    
\section{Acknowledgments}
WS acknowledges support of the School of Mathematical and Physical Sciences Doctoral Training studentship. AM acknowledges support from the Leverhulme Trust Research Project Grant (RPG-2014-149) and the European Union's Horizon 2020 research and innovation programme under the Marie Sklodowska-Curie grant agreement No 642866. AM's work was partially supported by the Engineering and Physical Sciences Research Council, UK grant (EP/J016780/1). The authors (WS, AM) thank the Isaac Newton Institute for Mathematical Sciences for its hospitality during the programme (Coupling Geometric PDEs with Physics for Cell Morphology, Motility and Pattern Formation; EPSRC EP/K032208/1).  AM was partially supported by a fellowship from the Simons Foundation. AM is a Royal Society Wolfson Research Merit Award Holder, generously supported by the Wolfson Foundation.

\label{lastpage}

\end{document}